\newtheorem{Theorem}{Theorem}[section]
 \newtheorem{cor}[Theorem]{Corollary}
 \newtheorem{Lemma}[Theorem]{Lemma}
 \newtheorem{Proposition}[Theorem]{Proposition}
 \theoremstyle{definition}
 \theoremstyle{remark}
 \newtheorem{Remark}[Theorem]{Remark}
 \newtheorem{example}{Example}
 \numberwithin{equation}{section}
\def\a{{\alpha}}
\def\b{{\beta}}
\def\la{{\lambda}}
\def\M{{\mathfrak M}}
\def\Z{{\mathbb Z}}
\def\Q{{\mathbb Q}}
\def\R{{\mathbb R}}
\def\ID2{$(\text{ID}_2)$}
\def\rk{{\rm rk}}
\begin{document}

\title[]{PRINC domains and Comaximal Factorization domains}

\author{Laura Cossu}

\address{Laura Cossu \\Dipartimento di Matematica ``Tullio Levi-Civita''\\
 Via Trieste 63\\  35121 Padova, Italy}

\email{lcossu@math.unipd.it}

\author{Paolo Zanardo}

\address{Paolo Zanardo \\Dipartimento di Matematica ``Tullio Levi-Civita''\\ Via Trieste 63\\  35121 Padova, Italy}

\email{pzanardo@math.unipd.it}

\subjclass[2010]{13G05, 13A05, 13C10}
 
\keywords{PRINC domains, unique comaximal factorization domains, monoid domains, seminormal domains}

\begin{abstract}
The notion of a PRINC domain was introduced by Salce and Zanardo (2014), motivated by the investigation of the products of idempotent matrices with entries in a commutative domain.
 An integral domain $R$ is a {\it PRINC domain} if every two-generated invertible ideal of $R$ is principal. PRINC domains are closely related to the notion of a unique comaximal factorization domain, introduced by McAdam and Swan (2004). In this article, we prove that there exist large classes of PRINC domains which are not comaximal factorization domains, using diverse kinds of constructions. We also produce PRINC domains that are neither comaximal factorization domains nor projective-free.
\end{abstract}

\maketitle

\section*{Introduction}
The notion of a PRINC domain was introduced in \cite{SalZan}, motivated by the investigation of the products of idempotent matrices with entries in a commutative domain.

The study of products of idempotent matrices over rings has raised much interest, both in the commutative and non-commutative setting, starting from a 1967 paper by J. Erdos \cite{Erdos}. An integral domain $R$ is said to satisfy {\it property ID$_n$} if any $n\times n$ singular matrix over $R$ can be written as a product of idempotent matrices. Important results on this property by Fountain \cite{Fount} and Ruitenburg \cite{Ruit} (see also \cite{Laff1} and \cite{BhasRao}) motivated Salce and Zanardo to conjecture that ``if an integral domain $R$ satisfies property ID$_2$, then it is a B\'ezout domain, i.e., an integral domain in which every finitely generated ideal is principal" (see \cite{SalZan}).
These kinds of questions have been recently investigated in \cite{CZ}, where it is proved that any domain satisfying ID$_2$ must be  a Pr\"ufer domain. The reverse implication is not true, not even for PIDs (see \cite{SalZan} and \cite{CZZ}).

As a matter of fact, PRINC domains form a large class of rings in which the above mentioned conjecture is true (see \cite{SalZan}). In Section $1$ we will give the original definition of PRINC domains; however, by our Theorem \ref{1.5} we may say, equivalently, that an integral domain is {\it PRINC} if every invertible ideal generated by two elements is principal. Besides B\'ezout domains, we may observe that unique factorization domains, local domains (i.e., domains with a unique maximal ideal) and projective-free domains (i.e., every finitely generated projective module over these domains is free) are PRINC.

After \cite{SalZan}, the study of PRINC domains was carried on by Peruginelli, Salce, and Zanardo in \cite{PerSalZan}. In Remark 1.8 of that paper, the authors remarked a close relation between PRINC domains and unique comaximal factorization domains (UCFDs for short), a notion introduced by McAdam and Swan in \cite{UCFD}. Indeed, Theorem 1.7 of \cite{UCFD} shows that a comaximal factorization domain (CFD) is a PRINC domain if and only if it is a UCFD. We refer to Section $1$ for the precise definitions.

 It was proved in \cite{PerSalZan} that a Dedekind domain is a PRINC domain if and only if it is a PID. In view of this result, it is natural to ask whether any Pr\"ufer PRINC domain is also a B\'ezout domain. In Corollary \ref{Prufer PRINC is Bezout} we answer this question affirmatively.

 Section $2$ of this paper is concerned with techniques for constructing PRINC domains. An easy but useful result shows that the union of a join-semilattice of PRINC domains is still a PRINC domain (Theorem \ref{union}).  
 In Theorem \ref{pullback_princ} we prove that, for an assigned PRINC domain $D$, the pullback $R=D+\mathfrak{M}$, where $\mathfrak{M}$ is the maximal ideal of some valuation domain containing the field of fractions of $D$, is also a PRINC domain. Theorem \ref{D[X]} supplements a result in \cite{UCFD}, by showing that a seminormal domain $D$ is PRINC if and only if the polynomial ring $D[X]$ is PRINC. 

One main purpose of the present paper is to show that the class of PRINC domains is much larger than that of unique comaximal factorization domains. In fact, in Sections 3 and 4 we construct two subclasses of PRINC domains that are not comaximal factorization domains (see Theorems \ref{monoid-domain nonCFD} and \ref{bigcup nonCFD}). We remark that the rings in the first subclass are generalizations of the monoid domains constructed by Juett \cite{Juett}. We also apply our techniques to a pair of important examples of integral domains that are not projective-free, namely the coordinate ring of the real $3$-dimensional sphere and one deep example of a UCFD with nonzero Picard group given by McAdam and Swan in Section 4 of \cite{UCFD}. We are thus able to produce classes of PRINC domains that are neither comaximal factorization domains nor projective-free (Theorem \ref{S2}).

\section{Definitions and first results}

In what follows we will always deal with commutative integral domains. For $R$ an integral domain, we denote by $R^{\times}$ its set of units. 

Two elements $a,b\in R$ are said to be an \textit{idempotent pair} if either $(a \ b)$ or $(b \ a)$ is the first row of a $2\times 2$ idempotent matrix, or equivalently if either $a(1-a)\in bR$ or $b(1-b)\in aR$ (see \cite{PerSalZan}).

It is easy to verify that any ideal generated by an idempotent pair is invertible: in fact, if say $a(1-a)\in bR$, then $(a, b) (1-a, b) = bR$.

We will say that an integral domain $R$ is a {\it PRINC domain} if every ideal generated by an idempotent pair is principal.

The next result was proved in \cite{SalZan}.

\begin{Proposition}\label{UFDprojfree_PRINC} 
If $R$ is either a UFD or a projective-free domain, then it is also a PRINC domain.
\end{Proposition}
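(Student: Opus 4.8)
The plan is to separate the statement into its two hypotheses and, in each case, reduce to the single assertion that every invertible ideal of $R$ is principal. This reduction is immediate from the observation recorded just before the statement: if $a,b$ form an idempotent pair, say $a(1-a)\in bR$, then $(a,b)(1-a,b)=bR$, so the ideal $(a,b)$ is invertible. Thus an ideal generated by an idempotent pair is always a (two-generated) invertible ideal, and to prove that $R$ is PRINC it suffices to show that in each of the two classes every such invertible ideal is principal.

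For the projective-free case I would argue module-theoretically. An invertible ideal $I$ is finitely generated (writing $1=\sum_i a_ib_i$ with $a_i\in I$ and $b_i\in I^{-1}$ exhibits generators $a_i$, and the dual basis lemma makes $I$ projective), and invertible ideals are precisely the rank-one locally free modules. If $R$ is projective-free, then $I$ is free, say $I\cong R^{r}$ as an $R$-module. Tensoring with the field of fractions $K$ gives $I\otimes_R K\cong K^{r}$; but any nonzero ideal of a domain satisfies $I\otimes_R K\cong K$, forcing $r=1$. Hence $I\cong R$, which means $I$ is principal.

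For the UFD case I would give an elementary gcd argument, since a UFD need not be projective-free (this is precisely the distinction exploited later in the paper). Let $I=(a,b)$ be invertible and put $d=\gcd(a,b)$, so that $a=da'$, $b=db'$ with $\gcd(a',b')=1$ and $I=d\,(a',b')$; as $d\neq0$, the ideal $J=(a',b')$ is again invertible. It then remains to prove $J=R$. Invertibility yields $z_1,z_2\in J^{-1}=\{z\in K: za',\,zb'\in R\}$ with $z_1a'+z_2b'=1$. Writing each $z_i=p_i/q_i$ in lowest terms in the UFD $R$, the conditions $z_ia',z_ib'\in R$ give $q_i\mid a'$ and $q_i\mid b'$, hence $q_i\mid\gcd(a',b')=1$; so each $z_i\in R$ and $1\in J$. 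Therefore $J=R$ and $I=dR$ is principal.

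The only genuinely delicate point is the UFD case, and within it the step that forces the auxiliary ideal $J$ to be the whole ring. The argument rests on two features of a UFD that I would take care to justify: that a gcd of finitely many elements exists, and that every element of $K$ admits a representation $p/q$ with $\gcd(p,q)=1$, so that the divisibility deductions $q_i\mid a'$ and $q_i\mid b'$ are legitimate. Both are standard consequences of unique factorization, so I anticipate no serious obstacle beyond stating these facts cleanly; the projective-free case is essentially formal once invertible ideals are recognized as rank-one projectives.
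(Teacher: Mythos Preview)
Your argument is correct. Note, however, that the paper does not actually supply its own proof of this proposition: it simply records the statement and attributes it to \cite{SalZan}. So there is no in-paper proof to compare against.

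That said, your reduction---observing that an ideal generated by an idempotent pair is always a two-generated invertible ideal, and hence that it suffices to show every such ideal is principal---is exactly the content of what the paper later states as Theorem~\ref{1.5}. Your projective-free argument (invertible $\Rightarrow$ finitely generated projective of rank one $\Rightarrow$ free of rank one $\Rightarrow$ principal) is the standard one and is sound. Your UFD argument is also correct: the key step, that $q_i\mid p_ia'$ together with $\gcd(p_i,q_i)=1$ forces $q_i\mid a'$, is the usual ``irreducible implies prime'' property of a UFD, and the existence of gcds and lowest-terms representations in the fraction field are likewise standard. The only cosmetic omission is the trivial case where $a$ or $b$ is zero, which is immediate.
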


In particular, local domains are PRINC domains, since every local domain is projective-free \cite{Kap}.

The main result on PRINC domains in \cite{SalZan} shows that these rings satisfy the conjecture mentioned in the introduction. 

\begin{Theorem}[Th. 4.6 of \cite{SalZan}]\label{princ+id2_implica_bezout}
If $R$ is a PRINC domain satisfying property \ID2, then $R$ is a B\'ezout domain.
\end{Theorem}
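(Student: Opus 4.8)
The plan is to reduce the statement to the claim that every two-generated ideal $(a,b)$ of $R$ is principal; since an ideal generated by $n$ elements is principal whenever every two-generated ideal is, an easy induction then upgrades this to all finitely generated ideals, so that $R$ is B\'ezout. Fix $a,b\in R$, not both zero, and attach to the pair the singular matrix $M=\bigl(\begin{smallmatrix} a & b\\ 0 & 0\end{smallmatrix}\bigr)$. Its image $MR^2$ is the column module $\{(ar+bs,0): r,s\in R\}$, which is isomorphic to the ideal $(a,b)$ as an $R$-module; hence it suffices to prove that $\mathrm{Im}(M)$ is free. Because $M$ is singular, property \ID2 lets me write $M=E_1E_2\cdots E_n$ as a product of idempotent matrices.

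First I would analyze a single rank-one idempotent $E=\bigl(\begin{smallmatrix} e & f\\ g & h\end{smallmatrix}\bigr)$. Expanding $E^2=E$ gives $e(1-e)=fg$ and $h(1-h)=fg$, so the entries of the columns $(e,g)$ and $(f,h)$ form idempotent pairs; by the PRINC hypothesis each column generates a principal ideal, say $(e,g)=(d_1)$ and $(f,h)=(d_2)$. Writing the columns as $d_1u_1$ and $d_2u_2$ with $u_1,u_2$ unimodular, and using $\det E=0$ to see that $u_1,u_2$ are proportional over the field of fractions $Q$, I can show that two proportional unimodular vectors in $R^2$ differ by a unit, so both columns are $R$-multiples of a single unimodular vector $u$. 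Consequently $\mathrm{Im}(E)=Iu$ for some ideal $I$; but $\mathrm{Im}(E)$ is a direct summand of $R^2$ (it is the image of an idempotent), so $R^2/\mathrm{Im}(E)$ is torsion-free, which forces $I=R$ and hence $\mathrm{Im}(E)=Ru$ is free. This is the key lemma.

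With this in hand the product collapses neatly. Discarding the factors equal to the identity, and noting that no factor can be zero since $M\neq 0$, I may assume each remaining $E_i$ has rank one. Applying the key lemma to the last factor gives $\mathrm{Im}(E_n)=Ru$ for a unimodular $u$, and then
\[ \mathrm{Im}(M)=(E_1\cdots E_{n-1})\,\mathrm{Im}(E_n)=R\,(E_1\cdots E_{n-1}u) \]
is a cyclic submodule of the torsion-free module $R^2$, hence free of rank one. Therefore $(a,b)\cong\mathrm{Im}(M)$ is principal, which completes the reduction.

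The hard part will be the key lemma, specifically the passage from the PRINC hypothesis to the freeness of the image of a single rank-one idempotent: one must argue that the two columns are unit multiples of one unimodular vector (using both $\det E=0$ and unimodularity of each column, the latter coming from PRINC) and then exploit that a direct summand of $R^2$ cannot have a torsion quotient to rule out a proper coefficient ideal. Once the image of an idempotent factor is known to be free, the reduction of the whole product to a cyclic—hence free—module is the clean final step, and it is here that \ID2 and PRINC combine to force the B\'ezout property.
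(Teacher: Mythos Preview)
The paper does not contain a proof of this theorem; it is simply quoted as Theorem~4.6 of \cite{SalZan} and used as a black box. Hence there is no ``paper's own proof'' against which your proposal can be compared.

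That said, your argument is sound and is in fact essentially the strategy of the original source. The identification $(a,b)\cong\mathrm{Im}\bigl(\begin{smallmatrix} a & b\\ 0 & 0\end{smallmatrix}\bigr)$ and the factorization into idempotents via $(\mathrm{ID}_2)$ are the expected opening moves. Your key lemma is correct: from $E^2=E$ one gets $e(1-e)=fg\in gR$ and $h(1-h)=fg\in fR$, so each column generates a principal ideal by PRINC; writing the columns as $d_1u_1$, $d_2u_2$ with $u_1,u_2$ unimodular and using $\det E=0$, the proportionality argument works because any unimodular row $(\alpha,\beta)$ in $R^2$ is completable to a basis (take $(-\mu,\lambda)$ where $\alpha\lambda+\beta\mu=1$), so two proportional unimodular vectors differ by a unit. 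The passage from $\mathrm{Im}(E)=Iu$ with $u$ unimodular to $I=R$ is then immediate, since $R^2/Iu\cong (R/I)\oplus R$ must be torsion-free. The final step, $\mathrm{Im}(M)=R\,(E_1\cdots E_{n-1}u)$, is clean.

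One minor point worth making explicit in a write-up: the edge cases where one column of $E$ vanishes (forcing $E$ to be, up to transpose, $\bigl(\begin{smallmatrix}1&0\\ g&0\end{smallmatrix}\bigr)$) should be handled separately, though they are trivial.
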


As a corollary, we immediately get from Proposition \ref{UFDprojfree_PRINC} that the classes of unique factorization domains and projective-free domains also satisfy the conjecture. We observe that, for $R$ projective-free, the above result was proved by Bhaskara Rao in \cite{BhasRao}, using different arguments.

\begin{example}[A UFD that is not projective-free]\label{UFD-non-proj-free}
The coordinate ring of the $3$-dimensional real sphere 
\[B_2=\frac{\mathbb{R}[X_0,X_1,X_2]}{\left( \sum_{i=0}^2X_i^2-1\right)},\]
where $X_0,X_1,X_2$ are indeterminates over the real numbers $\mathbb{R}$, is a factorial domain (cf. \cite[Prop. 8.3]{SamuelUFD}) that is not projective-free. This fact can be seen in the following way: let $F= {\bigoplus}_{i = 0}^2 B_2 e_i \cong B_2^3$, and
consider the $D$-epimorphism 
\[\phi_0:F\rightarrow B_2,\]
defined, as in \cite[Ex.~2.10]{Lam}, by $\phi_0(e_i)=X_i$, $0 \le i \le 2$. If we set $P_0={\rm Ker}\, \phi_0$, we get $F\cong P_0 \oplus B_2$, hence $P_0$, being a direct summand of a free module, is a finitely generated  projective $R$-module of rank $2$. It has been proved by different techniques (cf. \cite{Kong,Swan}) that $P_0$ is not a free module and therefore $B_2$ is not a projective-free domain.
\end{example}

\begin{example}[A PRINC domain that is  neither a UFD, nor Dedekind, nor semi-local, but is projective-free] 
In Section 4 of \cite{PerSalZan}, it is proved that the rings $\mathbb{Z}(\sqrt{-3})$ and $\mathbb{Z}(\sqrt{-7})$ are PRINC domains. So these rings are PRINC domains that are neither UFDs, Dedekind, nor semi-local, hence, by Theorem \ref{princ+id2_implica_bezout} they cannot satisfy property \ID2.
However, in \cite{PerSalZan} it is also proved that $\mathbb{Z}(\sqrt{-3})$ and $\mathbb{Z}(\sqrt{-7})$ are projective-free. This follows from the fact that their invertible ideals are principal.
\end{example}

In view of Proposition \ref{UFDprojfree_PRINC} it is natural to ask if there exist PRINC domains which are neither UFDs nor projective-free. An example of this kind of domain may be found in \cite[Sect.~4]{UCFD}. We provide a different example in Section 2.

\medskip
 
The notion of a PRINC domain is closely related to the notion of a unique comaximal factorization domain introduced by McAdam and Swan in \cite{UCFD}.

We recall the definitions. Two elements $c,d$ of an integral domain $R$ are said to be \textit{comaximal} if the ideal $( c, d ) = R$. A nonzero nonunit element $b$ of $R$ is called \textit{pseudo-irreducible} if $b \ne cd$ whenever $c,d$ are comaximal nonunits of $R$. We call $b=b_1b_2\cdots b_m$ a \textit{complete comaximal factorization} of $b$ if the $b_i$'s are pairwise comaximal pseudo-irreducible elements. We say that $R$ is a \textit{comaximal factorization domain} (CFD, for short), if every nonzero nonunit $b\in R$ has a complete comaximal factorization. If this factorization is unique (up to order and units), then $R$ is said to be a \textit{unique comaximal factorization domain} (UCFD, for short). It is worth noting that comaximal factorization domains are very common. For instance, local domains are UCFDs since every nonunit is pseudo-irreducible. Moreover, Noetherian domains and, more generally, $J$-Noetherian domains are CFDs. We refer to \cite[Lemma 1.1]{UCFD} for other examples of CFDs. 

As observed in \cite[Remark 1.8]{PerSalZan}, a CFD $R$ is a UCFD if and only if it is a PRINC domain. The crucial observation to verify this fact is that a nonzero ideal $I$ is  generated by an idempotent pair if and only if $I$ is an $S$-ideal, i.e. $I=(a,b)=(a^2,b)$, for some $a,b\in R$ (see \cite[Cor.~4.14]{Juett_12}, \cite[Prop.~1.5]{McA_S_05}). Since, by Theorem 1.7 of \cite{UCFD}, a CFD $R$ is a UCFD if and only if every $S$-ideal of $R$ is principal, the notions of PRINC domain and of UCFD coincide within the class of comaximal factorization domains. However, the notion of a PRINC domain is much more general then that of a UCFD. In Sections 3 and 4 we provide large classes of PRINC domains that are not CFDs.

\begin{example}[A CFD that is not a UCFD]
Any Dedekind domain $R$ is a comaximal factorization domain, being Noetherian. By Corollary 2.6 of \cite{PerSalZan} (or by our next Theorem \ref{1.5}) $R$ is PRINC if and only if it is a PID. Therefore any Dedekind domain that is not a PID is an example of a CFD that fails to be a UCFD. As an {\it uncommon} example, we mention the minimal Dress ring $D$ of $\R(X)$, defined as $D = \R[1/(1 + f^2): f \in \R(X)]$. This ring, whose structure is described in \cite{PruferDress}, is a Dedekind domain that is not PRINC. This can be easily seen by observing that $1/(1+X^2)$ and $X/(1+X^2)$ form an idempotent pair in $D$ but, by \cite[Prop.~2.4]{PruferDress}, $(1/(1+X^2),X/(1+X^2))$ is a $2$-generated ideal of $D$.
\end{example}

\medskip

In \cite[Cor.~2.6]{PerSalZan} it is proved that every Dedekind PRINC domain is a PID. It is natural to ask whether the analogous result for the non-Noetherian case holds, i.e., if every Pr\"ufer PRINC domain is a B\'ezout domain. This was explicitly proved to be true when $R$ has finite character (see \cite[Cor.~1.7]{PerSalZan}) and when $R$ is a CFD (see \cite[Cor.~1.9]{UCFD}). To end this section, we give a direct simple proof of the general case.

The next lemma can be proved with an intermediate step, combining Lemma 1.5 (b) in \cite{UCFD}, with Theorem 1.3 in \cite{PerSalZan}. For the reader's convenience, we give a direct proof, essentially identical to that in \cite[Lemma 1.5]{UCFD}.

\begin{Lemma}\label{invertible idempotent}
Every two-generated invertible ideal of an integral domain $R$ is isomorphic to an ideal of $R$ generated by an idempotent pair.
\end{Lemma}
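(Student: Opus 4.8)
The plan is to use invertibility to manufacture, out of the given ideal, an explicit isomorphic copy inside $R$ on which the idempotent-pair condition can be checked by a one-line computation. Write the ideal as $I=(a,b)$ and let $I^{-1}=\{x\in K:xI\subseteq R\}$ denote its inverse in the field of fractions $K$ of $R$. Since $I$ is invertible, $II^{-1}=R$, so $1\in II^{-1}$. Writing $1$ as a finite sum of products of an element of $I$ with an element of $I^{-1}$, and expanding each $I$-factor in terms of the generators $a,b$, I can collect coefficients to obtain a relation $ua+vb=1$ with $u,v\in I^{-1}$. The point of insisting that $u,v\in I^{-1}$ is that then all four products $ua,\,ub,\,va,\,vb$ lie in $I^{-1}I\subseteq R$.

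First I would dispose of the case where $I$ is principal: a principal ideal $(b)$ is generated by the idempotent pair $(b,0)$ because $0\in bR$, so there the statement is immediate. Assuming henceforth that $I$ is not principal, the relation $ua+vb=1$ forces $u\neq 0$ (if $u=0$ then $vb=1$ gives $I=bR$). I would then set $J=uI=(ua,ub)\subseteq R$; since $u\neq 0$, multiplication by $u$ is an injective $R$-linear map, so $J\cong I$ as $R$-modules. Putting $c=ua$ and $d=ub$, the relation $vb=1-ua=1-c$ yields
\[
c(1-c)=(ua)(vb)=(va)(ub)=(va)\,d,
\]
and because $va\in R$ this exhibits $c(1-c)\in dR$. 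Hence $(c,d)$ is an idempotent pair, $J$ is the ideal it generates, and $I\cong J$, as required.

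I expect the only genuinely delicate point to be the bookkeeping in the opening step, namely verifying that the coefficients $u,v$ extracted from an expression for $1\in II^{-1}$ really belong to $I^{-1}$. This is where the module structure of $I^{-1}$ is used: $I^{-1}$ is an $R$-submodule of $K$, so replacing each summand's $I$-factor by $ra+sb$ and regrouping keeps the resulting coefficients $\sum r_i x_i$ and $\sum s_i x_i$ inside $I^{-1}$. Once this is secured, the membership $va\in I^{-1}I\subseteq R$ is automatic and the identity $c(1-c)=(va)\,d$ makes the idempotent-pair property fall out with no further work.
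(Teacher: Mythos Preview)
Your proof is correct and follows essentially the same route as the paper: obtain $\lambda,\mu\in I^{-1}$ with $\lambda a+\mu b=1$, set $J=\lambda I=(\lambda a,\lambda b)$, and verify $\lambda a(1-\lambda a)=(\mu a)(\lambda b)\in (\lambda b)R$. You are simply more explicit about why the coefficients lie in $I^{-1}$ and about the degenerate case $\lambda=0$ (where $I$ is principal), points the paper leaves implicit.
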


\begin{proof}
Let $I=(a,b)$ be a two-generated invertible ideal of $R$. Then there exist $\lambda, \mu\in I^{-1}$ such that $\lambda a+ \mu b=1$. From this relation we derive $\lambda a (1-\lambda a)= \la b \mu a  $, hence $\lambda a, \lambda b \in R$ form an idempotent pair. Set $J=(\lambda a, \lambda b)\subseteq R$. Then $I\cong J$, with $J$ an ideal of $R$ generated by an idempotent pair.
\end{proof}

The following Theorem is an immediate consequence of Lemma \ref{invertible idempotent}.

\begin{Theorem}\label{1.5}
An integral domain $R$ is PRINC if and only if every two-generated invertible ideal of $R$ is principal.
\end{Theorem}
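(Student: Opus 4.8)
The plan is to prove the two implications separately, using Lemma \ref{invertible idempotent} for the substantive direction and, for the converse, the observation made right after the definition of an idempotent pair, namely that any ideal generated by an idempotent pair is invertible (if $a(1-a)\in bR$, then $(a,b)(1-a,b)=bR$).

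For the converse direction, I would assume that every two-generated invertible ideal of $R$ is principal, and take an arbitrary ideal $J=(a,b)$ generated by an idempotent pair. By the observation just recalled, $J$ is invertible, and it is two-generated by construction; hence the hypothesis forces $J$ to be principal. Since $J$ was an arbitrary ideal generated by an idempotent pair, $R$ is PRINC. This direction requires essentially no computation.

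For the forward direction, I would assume $R$ is PRINC and let $I$ be a two-generated invertible (hence nonzero) ideal. By Lemma \ref{invertible idempotent}, $I\cong J$ as $R$-modules for some ideal $J$ generated by an idempotent pair. Because $R$ is PRINC, $J$ is principal, say $J=aR$ with $a\neq 0$; since $R$ is a domain, the map $r\mapsto ar$ shows $J\cong R$, and therefore $I\cong R$ as an $R$-module.

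The only point deserving care is the passage from $I\cong R$ to the assertion that $I$ is a principal ideal, and this is the (mild) main obstacle. I would settle it with the standard module-theoretic fact that a nonzero ideal of a domain which is free of rank one is principal: an isomorphism $\varphi\colon R\to I$ is determined by $x:=\varphi(1)\in I\subseteq R$, and surjectivity gives $I=\varphi(R)=xR$. Combining the two directions yields the stated equivalence; apart from this clean-up, everything follows immediately from Lemma \ref{invertible idempotent} and the definition of a PRINC domain.
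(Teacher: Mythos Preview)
Your proof is correct and follows exactly the approach the paper intends: the paper records the theorem as ``an immediate consequence of Lemma~\ref{invertible idempotent}'', and you have simply written out the two directions, using the lemma for the forward implication and the earlier observation that ideals generated by idempotent pairs are invertible for the converse. The only detail you added beyond the paper's one-line justification is the routine verification that an ideal isomorphic to $R$ in a domain is principal, which is harmless and correct.
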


The following straightforward corollary answers a question left open in \cite{PerSalZan}.

\begin{cor}\label{Prufer PRINC is Bezout}
$R$ is a Pr\"ufer PRINC domain if and only if $R$ is a B\'ezout domain.
\end{cor}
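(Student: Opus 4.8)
The plan is to prove both implications, the reverse being immediate and the forward one resting on a short induction on the number of generators. For the reverse implication, I would start from the fact that in a B\'ezout domain every finitely generated ideal is principal. In particular, every nonzero finitely generated ideal is invertible (a nonzero principal ideal $(a)$ has inverse $(a^{-1})$ in the field of fractions), so $R$ is Pr\"ufer; and every two-generated invertible ideal, being finitely generated, is principal, so $R$ is PRINC by Theorem \ref{1.5}. Thus B\'ezout immediately gives Pr\"ufer and PRINC.

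For the forward implication, suppose $R$ is a Pr\"ufer PRINC domain. Because $R$ is Pr\"ufer, every finitely generated ideal is invertible; in particular every two-generated ideal is invertible, and hence principal by Theorem \ref{1.5}. The goal then reduces to upgrading this from two generators to arbitrarily many, so that every finitely generated ideal becomes principal and $R$ is B\'ezout.

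The upgrade is carried out by induction on the number of generators. Let $I=(a_1,\dots,a_n)$ with $n\ge 3$. The two-generated ideal $(a_1,a_2)$ is principal, say $(a_1,a_2)=(d)$; then $I=(d,a_3,\dots,a_n)$ is generated by $n-1$ elements, and the inductive hypothesis yields that $I$ is principal. The base cases $n\le 2$ are exactly what was established above, so every finitely generated ideal of $R$ is principal.

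I do not expect any serious obstacle here: the whole substance is concentrated in Theorem \ref{1.5}, which converts the PRINC hypothesis into the statement that two-generated invertible ideals are principal, combined with the defining property of a Pr\"ufer domain that makes every two-generated ideal invertible in the first place. The only point deserving a moment of care is the passage from two to finitely many generators, but this is the standard reduction and presents no real difficulty, which is why the corollary can be called \emph{straightforward}.
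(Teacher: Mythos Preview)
Your proposal is correct and matches the paper's intent: the paper gives no explicit proof, simply calling the corollary a \emph{straightforward} consequence of Theorem~\ref{1.5}, and your argument is precisely the intended unpacking---combine Pr\"ufer (nonzero finitely generated ideals are invertible) with Theorem~\ref{1.5} (two-generated invertible ideals are principal) and then induct on the number of generators. The reverse implication is, as you note, immediate.
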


\section{Constructions of PRINC domains}
In this section we provide some techniques to construct PRINC domains.

In the following proposition, we prove that PRINC domains can be easily constructed via pullbacks.

\begin{Theorem}\label{pullback_princ}
Let $D$ be a PRINC domain with field of fractions $Q$, $V$ a valuation domain containing $Q$, $\mathfrak M$ the maximal ideal of $V$. Then the pullback $R=D+ \M$ of $D$ is also a PRINC domain.
\end{Theorem}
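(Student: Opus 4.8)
The plan is to verify the defining property of a PRINC domain directly: I will show that every ideal $(a,b)$ of $R=D+\M$ generated by an idempotent pair is principal (equivalently, by Theorem~\ref{1.5}, that every two-generated invertible ideal is principal). First I would record the three features of the $D+\M$ construction that carry the whole argument. Since $Q\subseteq V$ is a field, $Q\cap\M=0$, hence $D\cap\M=0$ and the reduction $R\to R/\M$ identifies $R/\M$ with $D$; I write $x\mapsto\bar x$ for this map, noting that $\M$ is a common ideal of $R$ and $V$. Because $V$ is a valuation (hence local) domain with maximal ideal $\M$, every element of $1+\M$ is a unit of $V$ whose inverse again lies in $1+\M\subseteq R$; thus $1+\M\subseteq R^{\times}$, and consequently \emph{any ideal $\mathfrak a$ of $R$ with $\mathfrak a+\M=R$ must equal $R$}. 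Finally, every nonzero element of $D$ lies in $Q^{\times}\subseteq V^{\times}$, i.e.\ is a unit of the overring $V$.

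Now let $a(1-a)=bc$ with $a,b,c\in R$ be an idempotent pair. Reducing modulo $\M$ gives an idempotent pair $\bar a(1-\bar a)=\bar b\,\bar c$ in $D$, so, $D$ being PRINC, the ideal $(\bar a,\bar b)_D$ is principal, say $(\bar a,\bar b)_D=\bar d\,D$ for some $d\in D$. The argument then splits according to whether $\bar d=0$. If $\bar d=0$, then $a,b\in\M$, so $1-a\in 1+\M$ is a unit of $R$, and the relation $a(1-a)=bc$ yields $a=bc(1-a)^{-1}\in bR$; hence $(a,b)=(b)$ is principal. (For a pair of the opposite type, $b(1-b)\in aR$, the same computation gives $(a,b)=(a)$.)

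The remaining case $\bar d\neq 0$ is where the interaction between the $D$-part and the $\M$-part must be controlled, and is the step I expect to be the main obstacle. Here $d$ is a unit of $V$. Writing $\bar a=\bar d\,\bar p$ and $\bar b=\bar d\,\bar q$ with $p,q\in D$, I would check that $a/d=p+(a-dp)/d$ and $b/d=q+(b-dq)/d$ both lie in $D+\M=R$, using that $a-dp,\,b-dq\in\M$ and $d^{-1}\in V$ (so the fractional corrections land in $\M$). Consequently $(a,b)=d\,(a/d,b/d)=dI'$ with $I'=(a/d,b/d)$ an honest ideal of $R$. Reducing $I'$ modulo $\M$ and cancelling $\bar d$ in $\bar d(\bar p,\bar q)_D=\bar d\,D$ gives $(\bar p,\bar q)_D=D$, i.e.\ $I'+\M=R$; by the preliminary remark $I'=R$, and therefore $(a,b)=dR=(d)$ is principal. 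The crux throughout is the tension that nonzero elements of $D$ become units in $V$ while $1+\M$ consists of units of $R$; I observe that the proof in fact only uses that $V$ is local with maximal ideal $\M$, not the full valuation hypothesis.
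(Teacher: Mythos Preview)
Your proof is correct and follows essentially the same approach as the paper: both arguments reduce the idempotent pair modulo $\M$ to exploit that $D$ is PRINC, and both rest on the two facts you isolate, namely $1+\M\subseteq R^{\times}$ and $Q^{\times}\subseteq V^{\times}$ (so that $Q\M=\M$). The paper organizes the case split slightly differently---three cases according to which of $a,b$ lie in $\M$, and in the main case uses that $a$ and its reduction $a'$ are \emph{associates} in $R$ whenever $a'\neq 0$---whereas you split only on whether the generator $\bar d$ vanishes and then divide through by $d$ inside $V$; the underlying content is identical. Your closing remark that only the locality of $V$ (together with $Q\subseteq V$) is used, not the full valuation hypothesis, is also correct and is not noted in the paper.
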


\begin{proof}
Note that $Q \M = \M$ since $u\M=\M$ for any $u\in V^{\times}$. We recall that $u \in R^{\times}$ if and only if $u = c + z$ for some $c \in D^{\times}$ and $z \in \M$. In fact $1/c \in R$ and $z/c \in \M$, hence 
$$
{\frac{1}{c + z}} = c^{-1}\left(1 - \frac{z/c}{1 + z/c}\right) \in R. 
$$

Let us pick two elements $a, b \in R$ that form an idempotent pair, say $a(1-a)=br$ for some $r\in R$, and prove that the ideal they generate is principal. We distinguish various cases.

If  $a\notin \M$ and $b\in \M$, then $b/a\in \M$. Hence, $b\in aR$. 

If $a\in \M$, then $1-a\in R^{\times}$. Hence $a=br(1-a)^{-1}\in bR$. 

Finally, assume that $a,b\notin \M$. Say $a \equiv a'$, $b \equiv b'$, $r \equiv r'$ modulo $\M$, where $a', b', r' \in D$ (note that $r' = 0$ if $a - 1 \in \M$). From the condition $a(1-a)=br$, we get $a'(1 - a') \equiv b'r'$ modulo $\M$, and therefore $a'(1 - a') = b'r'$.  Then $a'$ and $b'$ form an idempotent pair in $D$, thus, being $D$ a PRINC domain, there exists an element $d\in D$ such that $a'D+b'D=dD$. Finally, since $a$, $b$ are associated in $R$ to $a'$, $b'$, respectively, we get $aR + bR = a'R + b'R =dR$.
\end{proof}

We observe that an analogous result for UCFDs can be found in Section 3 of \cite{UCFD}.

The preceding result allows us to give an example of a PRINC domain that is neither factorial nor projective-free, namely, a pullback of the coordinate ring $B_2$ of the real $2$-sphere $S^2$, as in Example \ref{UFD-non-proj-free}. In the last section, we will also consider the deep example given by McAdam and Swan in \cite{UCFD}.

Recall that the \textit{rank} of a torsion-free module $X$ over an integral domain $R$, denoted by $\rk_R(X)$, is the rank of a maximal free submodule of $X$, i.e., the maximal number of elements of $X$ that are linearly independent over $R$ (see for instance \cite[Ch.I, p.~24]{FuSal_nn}). The rank coincides with the dimension of the $Q$-vector space $Q \otimes_R X$, where $Q$ is the field of fractions of $R$. 

For any finitely generated torsion-free $R$-module $X$, we denote by $g_R(X)$ the minimum number of generators needed to generate $X$. It is well-known that $g_R(X) \ge \rk_R(X)$, where equality implies that $X$ is a free $R$-module. Indeed, if $X=\langle x_1,\dots, x_n\rangle$ ($n$ minimum), then also $Q \otimes_R X$ is generated by $x_1, \dots, x_n$, hence, $n\geq \dim (Q \otimes_R X)=\rk_R (X)$.  Moreover, if $n=\rk_R (X)$, then $x_1, \dots, x_n$ must be linearly independent, hence $X=\bigoplus_{i=1}^n R x_i$.

We will need the next lemma, concerned with extensions of non-projective free integral domains.

\begin{Lemma} \label{Lemma}
Let  $R$ be an integral domain of the form $R = D + M$, where $D$ is a subring of $R$ and $M$ is an ideal of $R$ such that $D \cap M = 0$. If $D$ is not projective-free, then also $R$ is not projective-free.
\end{Lemma}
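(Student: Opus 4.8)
The plan is to exploit the fact that, under the stated hypotheses, $D$ is a ring-theoretic retract of $R$. Since $M$ is an ideal of $R$ and $R = D + M$ with $D \cap M = 0$, the quotient map $\pi\colon R \to R/M$ identifies $R/M$ with $D$; concretely, $\pi(d + m) = d$ for $d \in D$ and $m \in M$, which is well defined and multiplicative precisely because $M$ is an ideal and $D \cap M = 0$. Writing $i\colon D \hookrightarrow R$ for the inclusion, we then have the key relation $\pi \circ i = \mathrm{id}_D$.

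Next I would use the hypothesis that $D$ is not projective-free to fix a finitely generated projective $D$-module $P$ that is not free, and pass to its extension of scalars along $i$, namely $P_R := R \otimes_D P$. Since $P$ is a direct summand of some $D^m$, the module $P_R$ is a direct summand of $R \otimes_D D^m \cong R^m$, hence a finitely generated projective $R$-module. It then remains to show that $P_R$ is not free, which exhibits $R$ as not projective-free.

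For this I would extend scalars back along $\pi$ and compute
\[
D \otimes_R P_R = D \otimes_R (R \otimes_D P) \cong (D \otimes_R R) \otimes_D P \cong D \otimes_D P \cong P,
\]
where the isomorphism $D \otimes_R R \cong D$ carries the standard $D$-module structure precisely because $\pi \circ i = \mathrm{id}_D$. Equivalently, extension of scalars along $i$ followed by extension along $\pi$ is extension along $\pi \circ i = \mathrm{id}_D$, hence naturally isomorphic to the identity functor on $D$-modules. Consequently, if $P_R$ were free, say $P_R \cong R^n$, then $P \cong D \otimes_R R^n \cong D^n$ would be free over $D$, contradicting the choice of $P$. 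Therefore $P_R$ is a finitely generated projective non-free $R$-module, and $R$ is not projective-free.

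The only delicate point is the bookkeeping of the left and right module structures in the composite tensor product, but this is purely formal once one records that the two functors involved are extension of scalars along $i$ and along $\pi$, and that they compose to the identity because $\pi \circ i = \mathrm{id}_D$.
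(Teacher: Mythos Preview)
Your proof is correct. The retraction observation is the right insight: since $M$ is an ideal with $D\cap M=0$, the map $\pi\colon R\to D$, $d+m\mapsto d$, is a ring homomorphism splitting the inclusion, and extension of scalars along $\pi\circ i=\mathrm{id}_D$ is naturally the identity. Hence if $R\otimes_D P$ were free, so would $P$ be.

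The paper takes a different, more explicit route. It writes $D^k=P_1\oplus P_2$ with $P_1$ not free, lifts the idempotent projections to $R^k$ to obtain a splitting $R^k=Q_1\oplus Q_2$ (these $Q_i$ are in fact the images of $R\otimes_D P_i$ in $R^k$), and then argues by comparing minimal numbers of generators and ranks: using the decomposition $R=D+M$ coordinatewise, one shows $g_D(P_i)\le g_R(Q_i)$, and a rank count yields a contradiction if both $Q_i$ are free. Your argument replaces this entire rank-counting step with the single observation that base-changing back along $\pi$ recovers $P$; this is shorter, more conceptual, and in fact works for arbitrary ring retracts (no domain hypothesis is needed). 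The paper's approach, by contrast, is self-contained and avoids any functorial language, at the cost of some bookkeeping with generators and ranks.
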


\begin{proof}
Since $D$ is not projective-free, there exists $k \ge 2$ such that $D^k =  \bigoplus_{j = 1}^k D e_j =P_1 \oplus P_2$, where, say, $P_1$ is not free, hence ${\rk}_D(P_1) < g_D(P_1)$. Let $\pi_i : D^k \to P_i$ be the canonical projection ($i = 1,2$), and define $e_{ij} = \pi_i(e_j)$, for $1 \le j \le k$. 

Let us consider the free $R$-module $R^k =  \bigoplus_{j = 1}^k R e_j \supset D^k$ and its $R$-submodules $Q_i = \sum_{j = 1}^k R e_{ij}$ ($i = 1, 2$). We extend by $R$-linearity the assignments $e_j \mapsto e_{ij}$ ($1 \le j \le k$), obtaining two surjective maps $\b_i : R^k \to Q_i$. Since $\beta_1(e_{ij}) = \pi_1(e_{ij})$ (and analogously for $\beta_2$), we see that $\b_1$ is the identity and $\b_2$ is zero when restricted to $Q_1$ (and symmetrically for $Q_2$).  Since $\{ e_1, \dots, e_k \} \subset P_1 + P_2 \subset Q_1 + Q_2$, we conclude that  $R^k = Q_1 + Q_2$. Moreover, if $z \in Q_1 \cap Q_2$, we get $z = \b_{1}(z) = \b_2(z) = 0$. It follows that $R^k = Q_1 \oplus Q_2$.

Let us observe that $D \cap M = 0$ easily yields $\sum_j D e_j \cap \sum_j M e_j = 0$. Then we also get $\sum_j D e_{1j} \cap \sum_j M e_{j} = 0$, since $e_{1j} \in D^k$ for every $j \le k$.
Let us verify that $g_D(P_1) \le g_R(Q_1)$.
 Indeed, let $z_1, \dots, z_m$ be a set of generators of $Q_1$, with $m = g_R(Q_1)$. Say $z_n = \sum_j r_{jn} e_{j}$, where $r_{jn} = d_{jn} + t_{jn} \in R$, $d_{jn} \in D$, $t_{jn} \in M$. Then
 $\sum_j d_{jn} e_j + \sum_j t_{jn} e_j \in \sum_j D e_{1j} + \sum_j M e_{1j}$ yields 
  $x_n = \sum_j d_{jn} e_j \in \sum_j D e_{1j} = P_1$, for every $n > 0$. Symmetrically, every $e_{1j}$ lies in  $\sum_n D x_n$. We conclude that $\sum_n Dx_n = P_1$, hence $g_D(P_1) \le g_R(Q_1)$. The same argument shows that $g_D(P_2) \le g_R(Q_2)$.

 Now we assume, for a contradiction, that $R$ is projective-free, so $Q_1$, $Q_2$ are free. Since $P_1$ is not free, we get $\rk_D(P_1) < g_D(P_1) \le g_R(Q_1) = \rk_R(Q_1)$, so $\rk_D(P_1) + \rk_D(P_2) = \rk_D(D^k) = \rk_R(R^k) = \rk_R(Q_1) + \rk_R(Q_2)$ yields $\rk_D(P_2) > \rk_R(Q_2) = g_R(Q_2) \ge g_D(P_2)$, impossible. We reached a contradiction.
\end{proof}

We are now in the position to provide a rather general example of a PRINC domain which is neither a UFD nor projective-free.

\begin{Proposition}\label{PRINC_nonUFD_nonPROJ}
Let $B_2$ be the coordinate ring of the $3$-dimensional real sphere, $Q$ its field of fractions, $V$ a valuation domain containing $Q$, $\M$ the maximal ideal of $V$. Then the pullback $R = B_2 + \M$ is a PRINC domain that is neither a UFD nor projective-free.
\end{Proposition}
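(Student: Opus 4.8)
The plan is to prove the three assertions essentially separately, since almost all the heavy lifting is already available. That $R$ is PRINC is immediate: $B_2$ is a UFD (Example \ref{UFD-non-proj-free}), hence PRINC by Proposition \ref{UFDprojfree_PRINC}, and as $V$ is a valuation domain containing $Q$ with maximal ideal $\M$, Theorem \ref{pullback_princ} applies directly to give that $R = B_2 + \M$ is PRINC. That $R$ is not projective-free I would obtain from Lemma \ref{Lemma} applied with $D = B_2$ and $M = \M$ (note $\M$ is an ideal of $R$, being an ideal of $V \supseteq R$ contained in $R$); the only hypothesis requiring a word is $B_2 \cap \M = 0$, which holds because $B_2 \subseteq Q$ and every nonzero element of the subfield $Q$ of $V$ is a unit of $V$, so $Q \cap \M = 0$. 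Since $B_2$ is not projective-free (Example \ref{UFD-non-proj-free}), neither is $R$.

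The substantive point is that $R$ is not a UFD, despite $B_2$ being factorial, and I expect this to be where the actual idea lies. Assuming, as the construction intends, that $V$ is nontrivial (so $\M \ne 0$; if $\M = 0$ then $R = B_2$ and the claim is vacuous), the key observation is that a nonzero nonunit $p$ of $B_2$ becomes a unit of $V$: indeed $p \in Q^{\times} \subseteq V^{\times}$, so the valuation $v$ of $V$ satisfies $v(p) = 0$. Choosing any nonzero $m \in \M$, we then have $v(m/p^{\,n}) = v(m) > 0$ for every $n \ge 1$, hence $m/p^{\,n} \in \M \subseteq R$ and so $p^{\,n} \mid m$ in $R$ for all $n$.

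To convert this into a contradiction I would first check that $p$ remains a nonunit of $R$: by the description of the units of the pullback recalled in the proof of Theorem \ref{pullback_princ}, namely $R^{\times} = \{\,c + z : c \in B_2^{\times},\ z \in \M\,\}$, an equality $p = c + z$ would force $p - c \in Q \cap \M = 0$, i.e. $p = c \in B_2^{\times}$, contrary to the choice of $p$. Thus $p$ is a nonzero nonunit of $R$ dividing the nonzero element $m$ to all orders, which is incompatible with unique factorization: taking a prime $\pi \mid p$ in the hypothetical UFD $R$, from $\pi^{\,n} \mid p^{\,n} \mid m$ one gets that the $\pi$-adic valuation of $m$ is infinite, a contradiction. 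The heart of the matter, and the only step beyond routine verification, is precisely this: forming the pullback turns every nonconstant element of the factorial ring $B_2$ into a unit of $V$, so the nonzero elements of $\M$ become infinitely divisible and factoriality (indeed atomicity) collapses.
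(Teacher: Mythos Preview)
Your proposal is correct and follows essentially the same line as the paper's proof: PRINC via Theorem~\ref{pullback_princ}, non-projective-free via Lemma~\ref{Lemma} with $B_2 \cap \M = 0$, and non-UFD via the infinite divisibility of a nonzero $m \in \M$ by a nonzero nonunit of $B_2$. The paper records the last point in one sentence (taking $d \in B_2$ nonunit and $z \in \M$ nonzero, then $z/d^n \in R$ for all $n$), while you add the verifications that $B_2 \cap \M = 0$ and that $p$ stays a nonunit in $R$; these are welcome but do not change the argument. One small quibble: in the degenerate case $\M = 0$ the claim is not ``vacuous'' but actually false (since then $R = B_2$ is a UFD), so it would be cleaner to say the statement tacitly assumes $V$ is not a field.
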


\begin{proof}
The coordinate ring $B_2$ is a PRINC domain since it is a UFD, thus it follows from Theorem \ref{pullback_princ} that $R$ is a PRINC domain. Obviously, $R$ is not a UFD: take any nonzero nonunit $d \in B_2$ and $z\neq 0 \in \M$. Then $z/d^n \in R$ for every $n > 0$, hence $z$ cannot be uniquely written as a finite product of irreducible elements. Finally $R$ is not projective-free. In fact $B_2$ is not projective-free and $B_2 \cap \M = 0$, so we are in the position to apply Lemma \ref{Lemma}.
\end{proof}

\medskip

The next easy result provides a useful way to construct PRINC domains.

\begin{Theorem} \label{union}
Let the integral domain $R$ be the union of a join-semilattice of PRINC domains. Then $R$ is a PRINC domain.
\end{Theorem}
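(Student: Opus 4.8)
The plan is to exploit the defining feature of a join-semilattice: ordered by inclusion, the family $\{R_i\}_{i\in I}$ of PRINC domains whose union is $R$ is \emph{directed}, since any two members $R_i, R_j$ are both contained in their join $R_{i\vee j}$. Consequently any finite set of elements of $R$ lies in a single member of the family; in particular this is what guarantees that $R$ is genuinely a subring (closed under the ring operations) and contains $1$.

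First I would take an idempotent pair $a, b \in R$, say $a(1-a) = br$ with $r \in R$, and aim to show that $(a,b)R$ is principal. Choosing the join of the indices at which $a$, $b$, $r$ appear, I obtain a single PRINC domain $R_k$ in the family with $a, b, r \in R_k$. Since $1 \in R_k$ and the relation $a(1-a) = br$ is an identity among elements of the subring $R_k$, the pair $a, b$ is again an idempotent pair in $R_k$. Because $R_k$ is PRINC, there is an element $d \in R_k$ with $(a,b)R_k = dR_k$.

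The final step is to transfer this principal generator from $R_k$ up to $R$. From $(a,b)R_k = dR_k$ I read off, on one hand, $d = \alpha a + \beta b$ with $\alpha, \beta \in R_k \subseteq R$, so that $d \in (a,b)R$ and hence $dR \subseteq (a,b)R$; on the other hand, $a = ds$ and $b = dt$ with $s, t \in R_k \subseteq R$, so that $a, b \in dR$ and hence $(a,b)R \subseteq dR$. Combining the two inclusions gives $(a,b)R = dR$, which is principal, completing the argument.

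I do not expect a serious obstacle here: the content is entirely in the directedness supplied by the join-semilattice, which is precisely what lets me pull the finitely many relevant elements into one PRINC domain and then push the resulting principal generator back out. The only point requiring care is verifying that the generator obtained inside $R_k$ still generates the \emph{extended} ideal in $R$, and this follows at once because both membership relations witnessing $(a,b)R_k = dR_k$ are preserved by the inclusion $R_k \hookrightarrow R$.
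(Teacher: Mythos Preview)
Your argument is correct and follows essentially the same approach as the paper: both use directedness to place the idempotent pair (together with the witness $r$) inside a single PRINC member of the family, conclude that the ideal is principal there, and then extend to $R$. Your write-up is simply more explicit about the transfer step from $R_k$ to $R$, which the paper states without justification.
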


\begin{proof}
Say $R = \bigcup_{\a \in \Lambda}R_\a$, where the $R_\a$ are PRINC domains, and $\{R_\a: \a \in \Lambda \}$ is a join-semilattice (with respect to inclusion). Let $a,b$ be an idempotent pair on $R$. Then, there exists $\lambda\in\Lambda$ such that $a,b$ is an idempotent pair on $R_{\lambda}$. Since $R_{\lambda}$ is a PRINC domain, $(a,b)R_{\lambda}$ is a principal ideal of $R_{\lambda}$, hence $(a,b)R$ is a principal ideal of $R$. By the definition, it follows that $R$ is a PRINC domain.
\end{proof}

\medskip

According to Swan \cite{Swan_seminormal} an integral domain $D$ is said to be {\it seminormal} if whenever $a,b\in D$ satisfy $a^2=b^3$, then there is an element $c\in D$ such that $c^3=a$ and $c^2=b$. It is well known from \cite[Th.~1.6]{Gilmer_seminormal} and \cite[Th.~1]{BrewerCosta} that the following conditions are equivalent for an integral domain $D$:
\begin{enumerate}
\item $D$ is seminormal;
\item ${\rm Pic}\,(D)= {\rm Pic}\,(D[X])= {\rm Pic}\,(D[X_1,\dots,X_n])$ for all $n\geq 1$;
\item if $\alpha\in Frac\,(D)$ and $\alpha^2,\alpha^3 \in D$, then $\alpha\in D$
\end{enumerate}

 The next theorem somehow supplements an analogous important result, namely Theorem 2.3 of \cite{UCFD}, that was proved for UCFDs. As a matter of fact, we will later prove the existence of seminormal PRINC domains that are not UCFDs, so the following is not a consequence of \cite[Th.~2.3]{UCFD}, although the proof is based on the arguments given in \cite{UCFD}.

\begin{Theorem} \label{D[X]}
Let $D$ be an integral domain, $\{X_{\lambda}\}_{\lambda\in\Lambda}$ a nonempty set of indeterminates over $D$. Then $D[\{X_{\lambda}\}_{\lambda\in\Lambda}]$ is a PRINC domain if and only if $D$ is a seminormal PRINC domain.
\end{Theorem}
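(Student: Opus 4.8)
The plan is to prove the two implications separately, in each case first cutting the set of indeterminates down to a single one. For the direction ``$D$ seminormal PRINC $\Rightarrow D[\{X_\lambda\}_{\lambda\in\Lambda}]$ PRINC'', I would note that $D[\{X_\lambda\}]$ is the directed union of the subrings $D[X_{\lambda_1},\dots,X_{\lambda_n}]$ indexed by the finite subsets of $\Lambda$, and that these form a join-semilattice under inclusion; by Theorem \ref{union} it then suffices to treat finitely many indeterminates, and an induction reduces everything to the one-variable step. To keep the induction running I would check that $D[X]$ is again seminormal, which follows from the Picard-group characterization recalled before the theorem (for seminormal $D$ one has $\mathrm{Pic}(D[X,Y_1,\dots,Y_m])=\mathrm{Pic}(D)=\mathrm{Pic}(D[X])$). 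The heart of this direction is: if $D$ is seminormal and PRINC, every two-generated invertible ideal $I$ of $D[X]$ is principal. Since $D$ is seminormal, $\mathrm{Pic}(D[X])=\mathrm{Pic}(D)$, so $I\cong \mathfrak a\,D[X]$ for some integral invertible ideal $\mathfrak a$ of $D$. As $I$ is generated by two elements, so is the extended ideal $\mathfrak a\,D[X]$; a short contraction argument — taking two generators $F,G\in\mathfrak a D[X]$ and reading off their constant terms $F_0,G_0\in\mathfrak a$ — shows $\mathfrak a=(F_0,G_0)$, so $\mathfrak a$ is a \emph{two-generated} invertible ideal of $D$. Because $D$ is PRINC, $\mathfrak a$ is principal, hence $\mathfrak a D[X]$ is free and $I\cong D[X]$ is principal.

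For the converse I would first observe that PRINC descends along $D\subseteq D[X]\subseteq D[\{X_\lambda\}]$: an idempotent pair lying in $D$ (resp.\ in $D[X]$) is an idempotent pair in the larger ring, where the ideal it generates is principal, say $hR$; a degree and divisibility argument forces $h$ into $D$ (resp.\ into $D[X]$), and contracting shows the ideal is already principal in $D$ (resp.\ $D[X]$). In particular $D$ is PRINC. It remains to prove that $D$ is seminormal, which I would do by contraposition through the criterion that seminormality fails exactly when there is $\alpha\in\mathrm{Frac}(D)\setminus D$ with $u:=\alpha^2,\ v:=\alpha^3\in D$. Given such an $\alpha$, I would exhibit the fractional ideal $P=(1+\alpha X)\,D[X]+\alpha^2\,D[X]$ of $D[X]$ and show it is a \emph{two-generated} invertible non-principal ideal; multiplying by $\alpha^2$ produces the honest integral ideal $(\alpha^2+\alpha^3X,\ \alpha^4)=(u+vX,u^2)$ of $D[X]$ with the same properties, contradicting that $D[X]$ is PRINC.

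Three points about $P$ must be checked. First, $P$ is genuinely two-generated: one verifies $\alpha^3=(1+\alpha X)\alpha^3+\alpha^2(-\alpha^2X)\in P$, so both conductor elements $\alpha^2,\alpha^3$ lie in the module generated by $1+\alpha X$ and $\alpha^2$. Second, $P$ is invertible with inverse $\overline P=(1-\alpha X)\,D[X]+\alpha^2\,D[X]$: the product $P\overline P$ contains $1-uX^2,\ u\pm vX$ and $u^2$, and were it contained in a maximal ideal $\mathfrak M$, then $u^2\in\mathfrak M$ would give $u\in\mathfrak M$ and hence $1=(1-uX^2)+uX^2\in\mathfrak M$, a contradiction, so $P\overline P=D[X]$. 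Third, $P$ is not principal: if $P=\theta D[X]$ then, since $\alpha^2$ is a unit of $\mathrm{Frac}(D)[X]$, one finds $\theta\in\mathrm{Frac}(D)^{\times}$; the containment $1+\alpha X\in\theta D[X]$ forces $\theta^{-1}$ into the conductor $\mathfrak c=\{x\in D:\ x\alpha\in D\}$, while $1\in\theta^{-1}P$ forces $\theta\in D$, so $\theta$ would be a unit of $D$ whose inverse lies in the proper ideal $\mathfrak c$, which is impossible.

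I expect this last construction to be the main obstacle. The failure of seminormality is fundamentally a statement about $\mathrm{Pic}(D[X])$, and a priori the nontrivial classes it produces could require three or more generators — the Forster--Swan bound over $D[X]$ already suggests this. Since the PRINC condition constrains only \emph{two}-generated invertible ideals, the entire converse hinges on pinning down a representative with exactly two generators. The delicate step is therefore the simultaneous verification that $P=(1+\alpha X,\alpha^2)$ is two-generated, invertible, and non-principal; the crucial collapse is that $\alpha^3$ is absorbed by the two chosen generators, which is what brings the expected three generators down to two and makes $P$ visible to the PRINC hypothesis.
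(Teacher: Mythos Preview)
Your proposal is correct and follows essentially the same architecture as the paper's proof: reduction to one variable via Theorem~\ref{union} and the stability of seminormality under adjoining indeterminates; the Picard comparison $\mathrm{Pic}(D)\cong\mathrm{Pic}(D[X])$ together with the constant-term contraction to handle the direction ``$D$ seminormal PRINC $\Rightarrow D[X]$ PRINC''; and, for the converse, descent of PRINC by evaluation at $0$ plus an explicit non-principal two-generated invertible ideal built from a witness $\alpha\in Q\setminus D$ with $\alpha^2,\alpha^3\in D$.

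The only substantive difference is the choice of that witness ideal. The paper exhibits an \emph{idempotent pair} directly, namely $(1+\alpha^2X^2)(1-\alpha^2X^2)$ and $\alpha^2(1+\alpha X)$, and argues non-principality by passing to $Q[X]$. You instead take the fractional ideal $P=(1+\alpha X,\alpha^2)$, verify invertibility via the explicit inverse $(1-\alpha X,\alpha^2)$, and then invoke Theorem~\ref{1.5} on the integral shift $\alpha^2P=(u+vX,u^2)$. Both ideals represent the same nontrivial class in $\mathrm{Pic}(D[X])$ (essentially the class of $1+\alpha X$), so the two arguments are minor variants of one another; yours trades the idempotent-pair computation for a clean product $P\overline P=D[X]$, while the paper's stays closer to the original definition of PRINC.
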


\begin{proof} By Theorem \ref{union} we may assume that $\Lambda$ is finite. From the characterizations of seminormal domains recalled above, $D$ is a seminormal domain if and only if $D[X]$ is seminormal. Therefore it is not restrictive to assume in what follows that $n = 1$.

Let us assume that $D[X]$ is a PRINC domain. To show that $D$ is a PRINC domain, let $a,b\in D$ be an idempotent pair, say $a(1-a)\in bD\subseteq b D[X]$. Then $a D[X]+ bD[X]=p D[X]$ for some $p\in D[X]$ and $a D+b D=p(0) D$, as desired. It remains to prove that $D$ is seminormal. Let $Q$ be the field of fractions of $D$. Assume, for a contradiction, that $D$ is not seminormal. Then there exists $\alpha\in Q \setminus D$ such that $\alpha^2$ and $\alpha^3$ are elements of $D$. Set $a=1-\alpha X$ and $b=1+\alpha X$. Then $ab=1-\alpha^2 X^2$ and $\alpha^2$ are comaximal elements of $D[X]$. In fact 
\begin{equation}\label{comaximal}
(1+\alpha^2 X^2)(1-\alpha^2 X^2)+\alpha^4 X^4=1.
\end{equation} 
One can directly verify that \[(1+\alpha^2 X^2)ab(1-(1+\alpha^2 X^2)ab)=\alpha^2 b(-\alpha^5X^7+\alpha^4 X^6-\alpha^3 X^5+\alpha^2X^4),\] so $(1+\alpha^2 X^2)ab$ and $\alpha^2 b$ form an idempotent pair in $D[X]$. Let $I=(1+\alpha^2 X^2)ab D[X]+\alpha^2 b D[X]$ the ideal of $D[X]$ generated by these elements. We claim that $I$ cannot be a principal ideal. In fact, if there exists $f\in D[X]$ such that $I=f D[X]$, then $f Q[X] = b \,((1+\alpha^2 X^2) \,a,\alpha^2)Q[X]$. The equality (\ref{comaximal}) implies that  $((1+\alpha^2 X^2)	\,a,\alpha^2)= Q[X]$, therefore $f Q[X]=b Q[X]$. So $f=u (1 + \a X)$ for a suitable $u\in Q$. Since $f \in D[X]$, both $u$ and $u\alpha$ are elements of $D$. Moreover, since $(1+\alpha^2 X^2)ab$ is a multiple of $f$ in $D[X]$, it follows that $u\in D^{\times}$ and $\alpha\in D$, impossible. Therefore $I$ is a non-principal ideal generated by an idempotent pair, hence $D[X]$ is not a PRINC domain, a contradiction.

Conversely, let $D$ be a seminormal PRINC domain. Then the canonical map of ${\rm Pic}\,(D)$ into $ {\rm Pic}\,(D[X])$ is an isomorphism. Let $J$ be a two-generated invertible ideal of $D[X]$. Then there exists an invertible ideal $I$ of $D$ such that $I\, D[X] \cong J$. Then $I\, D[X]=(f_1(X),f_2(X))$ and, since $I=I \,D[X]\cap D$, $I$ is generated by the two constant terms $f_1(0),f_2(0)\in D$.  So $D$ PRINC yields $I$ principal, by Theorem \ref{1.5}, and, consequently, $J$ is principal, as well. We conclude that $D[X]$ is a PRINC domain, since $J$ was arbitrary.
\end{proof} 

\begin{example}
Consider the following local domain $D = K[y^2, y^3]_{(y^2,\,  y^3)}$ ($K$ a field, $y$ an indeterminate). Being local, $D$ is a PRINC domain. Moreover $D$ is not seminormal, since $y \notin D$. By Theorem \ref{D[X]} we conclude that $D[X]$ {\it is not} a PRINC domain.
\end{example}

The next examples are inspired by those given by Juett in \cite{Juett}.

In what follows $S$ will denote a commutative cancellative torsion-free semigroup.  We adopt the usual notation $D[X; S]$ for the semigroup ring of $S$ over the integral domain $D$, and $X^s$, $s \in S$, for the formal powers over $S$ that satisfy the usual relations (see \cite[Ch.~2]{Gilmer_comm_semigroup}). Under our assumptions on $S$, $D[X; S]$ is an integral domain (see \cite[Th.~8.1]{Gilmer_comm_semigroup}).  Following \cite{ACHZ} we say that a commutative semigroup $S$ is {\it locally cyclic} if, for any assigned $s_1,s_2 \in S$, there exists $s\in S$ such that $s_1 \mathbb N + s_2 \mathbb N \subseteq s \mathbb N$. If $S$ is locally cyclic, it is easy to show that $D[X; S]$ is the union of the join-semilattice of its polynomial subrings $D[X^s]$, $s \in S$.

\medskip

Now we consider a different kind of construction.
Let $D_0$ be an integral domain, $D = D_0[X_n : n > 0]$, take the $D$-ideal 
$$
J = (X_i - X_{i +1} (1 + X_{i+1}): i > 0),
$$
and define $R = D/J$. 

We use the notation $x_n = X_n + J$, so $R = D_0[x_n : n > 0]$. For each $n > 0$, let $R_n = D_0[x_1, \dots, x_n]$. The equalities $x_i = x_{i+1}(1 + x_{i+1})$ imply $R_n = D_0[x_n]$. By the definition we get $R_n \subseteq R_{n+1}$, hence $R = \bigcup_{n > 0} R_n$.

Moreover, $R$ is an integral domain and $D_0[x_n]$ is a polynomial ring for every $n > 0$. These rather expectable but not quite evident facts are proved in the next proposition.

\begin{Proposition}
In the above notation, $R_n = D_0[x_n] \cong D_0[Z]$, where $Z$ is an indeterminate. In particular, $R = \bigcup_{n > 0} R_n$ is an integral domain. 
\end{Proposition}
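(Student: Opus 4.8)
The plan is to reduce the whole statement to a single transcendence fact. The text already supplies $R_n = D_0[x_n]$ and $R = \bigcup_{n>0} R_n$ as an increasing union, so what remains is to show that $x_n$ is transcendental over $D_0$ in $R = D/J$. Granting this, the canonical surjection $D_0[Z] \to R_n$, $Z \mapsto x_n$, is injective, hence an isomorphism $R_n \cong D_0[Z]$, so each $R_n$ is an integral domain; and an increasing union of integral domains is an integral domain, giving that $R$ is a domain. Thus the only real content is transcendence of $x_n$, and the danger is circularity: a priori we do not even know that $R$ is reduced, let alone a domain.

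To break this circularity I would manufacture a target domain into which $R$ maps and in which the image of $x_n$ is manifestly transcendental. The key observation is that the inclusion $R_m \subseteq R_{m+1}$ carries the generator $x_m$ to $x_{m+1}(1 + x_{m+1})$, i.e.\ to a degree-two polynomial in the next generator. Accordingly, set $A_m = D_0[t_m]$ (a genuine polynomial ring) and define transition homomorphisms $A_m \to A_{m+1}$ by $t_m \mapsto t_{m+1}(1 + t_{m+1})$. Each such map is injective, because $t_{m+1}(1 + t_{m+1})$ is a nonconstant polynomial and hence transcendental over $D_0$. Let $A = \varinjlim_m A_m$ be the direct limit (equivalently, the union of the chain $A_1 \hookrightarrow A_2 \hookrightarrow \cdots$ after identifying each $A_m$ with its image). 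A direct limit of integral domains along injective maps is again an integral domain, each $A_m$ embeds into $A$, and writing $\xi_m \in A$ for the image of $t_m$ we have, by construction, $\xi_m = \xi_{m+1}(1 + \xi_{m+1})$ with each $\xi_m$ transcendental over $D_0$.

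Since the elements $\xi_n$ satisfy the defining relations $\xi_i = \xi_{i+1}(1 + \xi_{i+1})$ of the presentation $R = D/J$, there is a well-defined $D_0$-algebra homomorphism $\Phi : R \to A$ with $\Phi(x_n) = \xi_n$. For each fixed $n$, the composite $D_0[Z] \to R_n \to A$ sending $Z \mapsto x_n \mapsto \xi_n$ is injective because $\xi_n$ is transcendental; therefore the first map $D_0[Z] \to R_n$ is injective, and being surjective it is the desired isomorphism $R_n \cong D_0[Z]$. This delivers the transcendence of $x_n$ and with it the whole statement. I expect the main obstacle to be precisely the setting up of $A$: one must choose the transition maps in the direction matching the inclusions $R_m \subseteq R_{m+1}$ (so that they are injective substitutions of a degree-two polynomial) and check that passing to the direct limit preserves both the domain property and the transcendence of each $\xi_m$. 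The upward relations, which amount to adjoining a square root, are exactly what would fail if one tried to realize $R$ inside a single polynomial ring rather than a direct limit.
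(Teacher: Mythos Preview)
Your argument is correct and takes a genuinely different route from the paper's proof. The paper works \emph{inside} the presentation: it passes to $K = \mathrm{Frac}(D_0)$, localizes $D = K[X_i : i>0]$ at $\mathfrak M = (X_i : i>0)$, changes coordinates to $Z_i = X_{i-1} - (1+X_i)X_i$, shows that $JT = \bigcup_m (Z_2,\dots,Z_m)T$ is a prime ideal of $T = D_{\mathfrak M}$, and finally uses Nakayama's lemma on $\mathfrak M/\mathfrak M^2$ to prove $X_n^k \notin JT$. Your approach instead builds an \emph{external} target: the honest direct limit $A = \varinjlim A_m$ of polynomial rings along the substitution maps $t_m \mapsto t_{m+1}(1+t_{m+1})$, and then factors a map $R \to A$ through the presentation. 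The injectivity of each transition map (leading term of degree $2d$ with nonzero leading coefficient, using that $D_0$ is a domain) is the only computation you need, and the rest is formal. Your method is shorter and more conceptual, essentially exhibiting $R$ as (a quotient mapping onto, and in fact isomorphic to) the evident colimit, whereas the paper's method stays closer to the given generators-and-relations description and extracts more: it actually identifies $JT$ as a prime ideal and pins down how the $Z_i$ sit inside $\mathfrak M/\mathfrak M^2$, information your argument bypasses but also does not need.
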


\begin{proof}

It suffices to verify that $x_n$ is transcendental over $K = {\rm Frac}(D_0)$. So, by extending the scalars, we may  assume $D_0 = K$ and $D = K[X_i : i > 0]$. Let $F(x_n) = x_n^k(a_0 + a_1x_n + \dots + a_hx_n^h) \in K[x_n]$, where $a_0 \ne 0$. We have to show that $F(x_n) \ne 0$, or, equivalently $F(X_n) \notin J $. 

Let us consider the localization $T = D_{\M}$, where $\M = (X_i : i > 0)$. Note that $J \subseteq \M$ implies that $JT$ is a proper ideal of $T$. Then $u_i = 1 + X_i$ is a unit of $T$ for every $i \ge 2$; let $Z_i = X_{i-1} - u_iX_i \in JT$. Since the $u_i$ are units of $T$, it is readily seen that $(X_1, Z_2, \dots, Z_m)T = (X_1, X_2, \dots, X_m)T$, for every $m \ge 2$. It follows that \[T = K[X_1, Z_2, \dots, Z_m, X_i : i \ge m+1]_\M,\text{ for every } m \ge 2.\] Take the ideal $J_m = (Z_2, \dots, Z_m)T$. It is then clear that $T/J_mT \cong K[X_1, X_i : i \ge m+1]_{\M_m}$, where $\M_m = (X_1, X_i : i \ge m +1)$. It follows that $J_mT$ is a prime ideal of $T$, hence $JT = \bigcup_{m \ge 2} J_m T$ is also a prime ideal of $T$.

To reach the desired conclusion, it suffices to show that $F(X_n) \notin JT \supset J$. Since $a_0 + a_1X_n + \dots + a_hX_n^h$ is a unit of $T$, it is enough to verify that $X_n^k \notin JT$. Assume, for a contradiction, that $X_n^k \in JT$. Since $X_{i-1} \equiv X_{i}u_i$ modulo $JT$, it follows that $X_i^k \in JT$ for every $i > 0$. In particular, $X_1^k \in JT$, hence $X_1 \in JT$, since $JT$ is a prime ideal. Then we get
$$
X_1 = \sum_{i = 2}^{s} (X_{i-1} - u_iX_{i})g_i = \sum_{i = 2}^{s} Z_i g_i,
$$
for suitable $g_i \in T$. As observed above, we have $(X_1, Z_2, \dots, Z_s)T = (X_1, X_2, \dots, X_s)T$. By Nakayama Lemma, the $X_1 + \M, Z_2 + \M, \dots, Z_s + \M$ are linearly independent in the $T/\M$-vector space $\M/\M^2$. But the above equality yields $X_1 \in (Z_2, \dots, Z_s)T$, impossible.
\end{proof}

\begin{Theorem} \label{D[X; S] PRINC}
Let $D$ be a seminormal PRINC domain. Then 

(i) if $S$ is a locally cyclic semigroup, then the semigroup domain $D[X; S]$ is a PRINC domain;

(ii) if $R = \bigcup_{n > 0} R_n$, where $R_n = D[x_n]$ and $x_n = x_{n+1}(1 + x_{n+1})$ for every $n > 0$, then $R$ is a PRINC domain.
\end{Theorem}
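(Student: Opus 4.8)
The plan is to reduce both statements to the combination of two results already in hand: Theorem \ref{D[X]}, which guarantees that $D[Z]$ is PRINC whenever $D$ is a seminormal PRINC domain, and Theorem \ref{union}, which propagates the PRINC property to unions of join-semilattices. The entire point of both constructions is that they are engineered to be directed unions of one-variable polynomial rings over $D$, so once the structural bookkeeping is in place the conclusion is immediate.

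For part (i) I would start from the remark preceding the theorem: since $S$ is locally cyclic, $D[X; S] = \bigcup_{s \in S} D[X^s]$, and the subrings $D[X^s]$ form a join-semilattice under inclusion. Indeed, given $s_1, s_2 \in S$, local cyclicity furnishes $s$ with $s_1\mathbb N + s_2\mathbb N \subseteq s\mathbb N$, so that $X^{s_1}$ and $X^{s_2}$ are both powers of $X^s$ and hence $D[X^{s_1}], D[X^{s_2}] \subseteq D[X^s]$; this yields the directedness needed to apply Theorem \ref{union}. Because $S$ is cancellative and torsion-free, every nonidentity element has infinite order, so $X^s$ is transcendental over $D$ and $D[X^s] \cong D[Z]$ is a genuine one-indeterminate polynomial ring. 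Theorem \ref{D[X]} then makes each $D[X^s]$ a PRINC domain, and Theorem \ref{union} gives that $D[X; S]$ is PRINC.

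For part (ii) the defining relation $x_n = x_{n+1}(1 + x_{n+1})$ shows $x_n \in D[x_{n+1}]$, hence $R_n = D[x_n] \subseteq D[x_{n+1}] = R_{n+1}$; thus $R = \bigcup_{n > 0} R_n$ is an ascending chain, which is trivially a join-semilattice. By the preceding Proposition, applied with $D_0 = D$, each $x_n$ is transcendental over ${\rm Frac}(D)$, so $R_n = D[x_n] \cong D[Z]$ is again a one-variable polynomial ring. Theorem \ref{D[X]} shows each $R_n$ is PRINC, and Theorem \ref{union} finishes the proof.

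I expect no serious obstacle, since the substantive content has been absorbed into the earlier results; the seminormality hypothesis on $D$ enters only through Theorem \ref{D[X]}. The only points requiring care are the two structural claims feeding Theorem \ref{union}: first, that the family $\{D[X^s]\}$ is genuinely directed, which is exactly where local cyclicity is used; and second, that in both constructions the relevant subrings are honest polynomial rings in a single variable over $D$ rather than proper quotients. The latter is precisely what the preceding Proposition secures for the chain in (ii) and what the torsion-free cancellative hypothesis secures for the semigroup in (i). Once these are granted, both parts follow at once.
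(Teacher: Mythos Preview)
Your proposal is correct and follows essentially the same approach as the paper: both parts are reduced to Theorem~\ref{union} applied to a join-semilattice of subrings isomorphic to $D[Z]$, with Theorem~\ref{D[X]} supplying the PRINC property on each piece. The paper's proof is a single sentence stating exactly this; your version simply spells out the structural bookkeeping (directedness from local cyclicity, transcendence from torsion-freeness and from the preceding Proposition) that the paper leaves implicit.
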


\begin{proof}
Both $D[X; S]$ and $R$ are unions of a join-semilattice of polynomial rings isomorphic to $D[Z]$, which is a PRINC domain by Theorem \ref{D[X]}. Thus the results follow from Theorem \ref{union}.
\end{proof}

\section{Constructions of non-CFD PRINC domains}

In this section we provide some constructions that produce PRINC domains that are not CFD domains. So the class of UCFD domains is strictly contained in the class of PRINC domains.
\medskip

According to \cite{ACHZ}, we say that a (commutative cancellative torsion-free) monoid $S$ is {\it pure} if it is order-isomorphic to a submonoid of $(\Q^+,+)$, it is locally cyclic, and for each $s\in S$ there exists a natural number $n>1$, depending on $s$, with $s/n\in S$. Typical examples for $S$ are $\Z_{T}\cap \Q^+$, where $T\neq\{1\}$ is any multiplicatively closed subset of $\Z\cap \Q^+$.
If for each $s\in S$, there is a natural number $n>1$ such that $s/n\in S$ is never a power of $p$, then we call $S$ a {\it $p$-pure} monoid (cf. \cite{Juett}). Clearly, a pure monoid is a $0$-pure monoid.

\begin{Theorem}\label{monoid-domain nonCFD}
Let $D$ be a seminormal PRINC domain of characteristic $\chi$ and $S$ a pure monoid such that:
\begin{enumerate}
\item if $\chi=0$ and $S$ is $p$-divisible for some prime number $p>1$, then $D\supseteq \Z[1/p]$;
\item if $\chi=0$ and $S$ is not $p$-divisible for every $p$, then $D\supseteq \Q$;
\item if $\chi\neq 0$, then $S$ is $\chi$-pure.
\end{enumerate} 
In all the above cases, the monoid domain $D[X;S]$ is a seminormal PRINC domain that is not a CFD.
\end{Theorem}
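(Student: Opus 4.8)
The plan is to prove the three assertions separately: that $D[X;S]$ is PRINC, that it is seminormal, and that it fails to be a CFD, the last being the substantial part. The PRINC claim is immediate, since a pure monoid is locally cyclic and hence Theorem \ref{D[X; S] PRINC}(i) applies verbatim to the seminormal PRINC domain $D$. For seminormality I would exploit that $D[X;S]=\bigcup_{s\in S}D[X^s]$ is a directed union (the join-semilattice recalled before Theorem \ref{D[X; S] PRINC}) of subrings $D[X^s]\cong D[Z]$, each of which is seminormal because $D$ is (a polynomial ring over a seminormal domain is seminormal, as recalled in the proof of Theorem \ref{D[X]}). It then remains to check that a directed union of seminormal domains is seminormal: given $\beta$ in the fraction field with $\beta^2,\beta^3\in D[X;S]$, directedness places both $\beta^2$ and $\beta^3$ in a single $D[X^s]$, whence $\beta=\beta^3/\beta^2\in\mathrm{Frac}(D[X^s])$, and characterization (3) for $D[X^s]$ forces $\beta\in D[X^s]\subseteq D[X;S]$.

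For the non-CFD claim I would fix a nonzero $s_0\in S$ and exhibit $1-X^{s_0}$ as a nonzero nonunit admitting comaximal factorizations into arbitrarily many nonunits. Using purity, I build an infinite descending chain $s_0>s_1>\cdots$ in $S$ with $s_{i-1}=p_is_i$, each $p_i$ prime, and iterate
\[
1-X^{s_{i-1}}=(1-X^{s_i})\,\Phi_i,\qquad \Phi_i=1+X^{s_i}+\cdots+X^{(p_i-1)s_i},
\]
to get $1-X^{s_0}=(1-X^{s_k})\,\Phi_1\cdots\Phi_k$ for every $k$. The key computation is that modulo any of the other listed factors one has $X^{s_i}\equiv 1$ (since each such factor divides $1-X^{s_i}$), whence $\Phi_i\equiv p_i$; thus any two of $\Phi_1,\dots,\Phi_k,(1-X^{s_k})$ generate an ideal containing $p_i$, so they are pairwise comaximal nonunits as soon as each $p_i$ is a unit of $D$. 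One also notes the reduction that if $s/n\in S$ then $s/p\in S$ for every prime $p\mid n$, since $s/p$ is an integer multiple of $s/n$; this lets me take the $p_i$ prime.

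This is exactly where the three hypotheses enter. In case (1) I divide repeatedly by the single prime $p$ for which $S$ is $p$-divisible; $p$ is a unit because $\Z[1/p]\subseteq D$. In case (2) every prime is a unit because $\Q\subseteq D$, so any primes produced by purity work. In case (3) the characteristic $\chi$ is prime and $\chi$-purity lets me always choose $p_i\neq\chi$; such primes are nonzero in $\mathbb{F}_\chi\subseteq D$, hence units. In all three situations the $p_i$ are units, so the factorizations above are genuine comaximal factorizations into $k+1$ nonunits. Finally I invoke the length bound: a complete comaximal factorization of $b$ into $m$ pseudo-irreducibles makes $R/(b)$, by the Chinese Remainder Theorem, a product of $m$ indecomposable rings (pseudo-irreducibility of $b_i$ being equivalent to connectedness of $\mathrm{Spec}(R/(b_i))$), hence $R/(b)$ has exactly $m$ connected components; any comaximal factorization into $k$ nonunits splits $R/(b)$ into $k$ nonzero pieces, forcing $k\le m$. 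Since $1-X^{s_0}$ admits such factorizations for arbitrarily large $k$, it has no complete comaximal factorization, so $D[X;S]$ is not a CFD.

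I expect the main obstacle to be the \emph{simultaneous} pairwise comaximality of all $k+1$ factors—not merely of consecutive ones—together with the correct matching of the three hypotheses to the chosen primes $p_i$; the reduction to prime divisors and the verification $\Phi_i\equiv p_i$ modulo the remaining factors are the technical crux. By contrast, the seminormality step and the final length-bound argument are routine once phrased, respectively, via directed unions and via counting connected components of $\mathrm{Spec}(R/(b))$.
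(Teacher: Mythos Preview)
Your strategy matches the paper's: PRINC via Theorem~\ref{D[X; S] PRINC}, seminormality via the directed union of copies of $D[Z]$, and non-CFD by exhibiting arbitrarily long comaximal factorizations of $1-X^{s_0}$. The paper's final step is slightly different from yours: instead of counting connected components, it assumes $D[X;S]$ is a CFD, invokes Theorem~1.7 of \cite{UCFD} (PRINC $+$ CFD $\Rightarrow$ UCFD), and then uses \emph{uniqueness} of the complete comaximal factorization to bound the length of any comaximal factorization into nonunits by refinement.

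One step in your version needs care. The equivalence ``$b_i$ pseudo-irreducible $\Leftrightarrow$ $\mathrm{Spec}(R/(b_i))$ connected'' is \emph{false} in general domains: in $\mathbb{Z}[\sqrt{-5}]$ one can take distinct non-principal maximal ideals $P,Q$ in the same ideal class, so that $PQ=(b)$ is principal; then $R/(b)\cong R/P\times R/Q$ is disconnected, yet $b$ is pseudo-irreducible because neither $P$ nor $Q$ is principal. The equivalence \emph{does} hold in PRINC domains---if $\bar e$ is a nontrivial idempotent modulo $b$, then $(\bar e,b)$ and $(1-\bar e,b)$ are ideals generated by idempotent pairs, hence principal, say $(c)$ and $(d)$, and $c,d$ give a comaximal factorization of $b$ into nonunits. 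Since you have already shown $D[X;S]$ is PRINC, your argument can be repaired by inserting this observation; but as written the parenthetical justification is not valid. The paper's UCFD route sidesteps the issue. (A smaller imprecision: your claim that ``each such factor divides $1-X^{s_i}$'' only holds for $\Phi_j$ with $j>i$ and for $1-X^{s_k}$; for $j<i$ you should instead reduce $\Phi_j$ modulo $\Phi_i$. The pairwise comaximality conclusion is unaffected.)
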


\begin{proof} 
In view of Theorem \ref{D[X; S] PRINC} we already know that $D[X; S]$ is a PRINC domain. We can readily verify that it is also seminormal, using the fact that $D[X; S]$ is the union of a join-semilattice of subrings isomorphic to $D[Z]$, which is seminormal by Theorem \ref{D[X]}. Assume, for a contradiction, that $D[X; S]$ is a CFD. Then it is also a UCFD, by Theorem 1.7 of \cite{UCFD} (see the first section of the present paper). Let $s$ be any nonzero element of $S$. Since $S$ is pure, then $s = nt$, with $t \in S$ and $n > 1$. In case (1) we may take $n=p$, in case (3) $n$ is coprime with $\chi$ since $S$ is $\chi$-pure. We get $1 - X^s = (1-X^t)(1+\sum_{i=1}^{n-1}(X^t)^i)$. Let us show that $1-X^t$ and $1+\sum_{i=1}^{n-1}(X^t)^i$ are comaximal nonunits of $D[X;S]$ and therefore $1 - X^s$ is not pseudo-irreducible. Set $Z=X^t$. Dividing the polynomial $1+\sum_{i=1}^{n-1}Z^i$ by $1-Z$ we get $1+\sum_{i=1}^{n-1}Z^i=(1-Z) \,q(Z)+ n$, where $n$ is a unit of $D$: in case (1) since $n=p$ and $1/p\in D$, in case (2) since $D\supseteq \Q$, in case (3) since $D\supseteq \mathbb{F}_{\chi}$ and $n$ is coprime with $\chi$. Similarly, $1-X^t$ has a nontrivial comaximal factorization of this form and its pseudo-irreducible factors are clearly comaximal with $1+\sum_{i=1}^{n-1}(X^t)^i$. In this way we can construct arbitrary long comaximal factorizations of $1-X^s$ contradicting the fact that $D[X;S]$ is a UCFD.

\end{proof}

\medskip

In the notation of the preceding section, let $D_0$ be a seminormal PRINC domain, $D = D_0[X_n : n > 0]$, $J = (X_i - X_{i +1} (1 + X_{i+1}): i > 0)\subseteq D$, and consider the ring $R = D/J = \bigcup_{n > 0}R_n$, where $R_n=D[x_n]$. We want to show that $R$ is not a CFD.  

\begin{Theorem} \label{bigcup nonCFD}
 Let $R=  \bigcup_{n > 0}R_n$ be as above. Then $R$ is a seminormal PRINC domain that is not a CFD.
\end{Theorem}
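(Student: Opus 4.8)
The plan is to establish the two assertions of Theorem \ref{bigcup nonCFD} separately, reusing the machinery already built. First I would dispatch the PRINC claim: by the construction, $R = \bigcup_{n>0} R_n$ with $R_n = D[x_n] \cong D_0[Z]$, and since $D_0$ is a seminormal PRINC domain, each $R_n$ is a PRINC domain by Theorem \ref{D[X]}. The chain $R_n \subseteq R_{n+1}$ makes $\{R_n\}$ a join-semilattice, so Theorem \ref{union} immediately gives that $R$ is PRINC. The seminormality of $R$ follows the same pattern: each $R_n \cong D_0[Z]$ is seminormal by the equivalence in Theorem \ref{D[X]} (seminormality of $D_0$ passing to the polynomial ring), and seminormality is preserved under unions of such ascending chains, using characterization (3) of seminormal domains — if $\alpha \in \mathrm{Frac}(R)$ satisfies $\alpha^2, \alpha^3 \in R$, then both lie in some $R_n$, forcing $\alpha \in R_n \subseteq R$.

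The substantive part is showing $R$ is not a CFD. The natural strategy, paralleling the proof of Theorem \ref{monoid-domain nonCFD}, is to exhibit a nonzero nonunit of $R$ that admits comaximal factorizations of unbounded length, hence no \emph{complete} comaximal factorization into finitely many pseudo-irreducible factors. The defining relations $x_i = x_{i+1}(1 + x_{i+1})$ are the key resource: they let a single element $x_1$ be rewritten, for each $n$, as a product $x_1 = x_{n+1}(1+x_{n+1})(1+x_n)(1+x_{n-1})\cdots(1+x_2)$ obtained by iterating the substitution. I would take $x_1$ (or a suitable chosen element) as the target and analyze the comaximality of the successive factors $x_{i+1}$ and $(1+x_{i+1})$ in $R$. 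Since $x_i$ and $1 + x_i$ generate the unit ideal whenever their sum $1 + 2x_i$ or an appropriate combination is a unit, the role of hypotheses on $D_0$ (and the characteristic, as in the monoid case) will be to guarantee the relevant coprimality.

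The main obstacle I anticipate is controlling comaximality and pseudo-irreducibility \emph{in the union} $R$ rather than in a single $R_n$. Two elements comaximal in $R$ need not be comaximal in any fixed $R_n$, and conversely one must verify that the factors produced really are pairwise comaximal nonunits in $R$, and that refining the factorization at stage $n+1$ genuinely lengthens it rather than merely reassociating units. Concretely, I would check that each $1 + x_{i+1}$ is comaximal with $x_{i+1}$ (so the two-factor splitting at each stage is a bona fide comaximal factorization) by producing an explicit coefficient relation, exactly as in \eqref{comaximal}; this is where the characteristic and divisibility hypotheses on $D_0$ would enter to ensure the required constant is a unit. Having shown that for every $n$ the element $x_1$ factors into at least $n$ pairwise comaximal nonunits, I conclude that $x_1$ has no complete comaximal factorization, so $R$ is not a CFD, completing the proof.
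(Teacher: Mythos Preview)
Your treatment of the PRINC and seminormality claims is fine and matches the paper. Two points in the non-CFD part need correction.

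First, the worry about characteristic and divisibility hypotheses on $D_0$ is a red herring. The comaximality of $x_i$ and $1+x_i$ is automatic in any ring, since $(1+x_i)-x_i=1$; no constant has to be inverted. More generally, in the factorization $x_1=x_m\prod_{i=2}^m(1+x_i)$ all factors are pairwise comaximal for purely structural reasons: for $i\le m$ one has $x_i\in x_mR$, so $(x_m,1+x_i)=R$; and for $i<j$ one has $x_i\in x_{j-1}R\subseteq (1+x_j)R$, so $(1+x_i,1+x_j)=R$. None of this uses anything about $D_0$. This is exactly how the paper argues, and it explains why Theorem \ref{bigcup nonCFD} carries no extra hypotheses on $D_0$ beyond ``seminormal PRINC''.

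Second, and more seriously, your final inference is not justified. From ``$x_1$ factors into $m$ pairwise comaximal nonunits for every $m$'' you conclude directly that $x_1$ has no complete comaximal factorization. In an arbitrary domain this implication fails: having one complete comaximal factorization of length $k$ does not, by itself, bound the length of other comaximal factorizations into nonunits. The paper closes this gap by inserting the UCFD step: assuming $R$ is a CFD, the already-established PRINC property forces $R$ to be a UCFD (via \cite[Th.~1.7]{UCFD}); then any complete comaximal factorization of $x_1$ has a well-defined length $n$, while refining the $m$-term factorization above (using CFD on each factor) yields a complete comaximal factorization of length $\ge m>n$, contradicting uniqueness. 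Without passing through UCFD, your argument does not reach a contradiction.
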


\begin{proof}
By Theorem \ref{D[X; S] PRINC}, we already know that $R$ is a PRINC domain. Like in the proof of Theorem \ref{monoid-domain nonCFD}, we see that it is also seminormal.  Let us assume, for a contradiction, that $R$ is a CFD. Then $R$ is actually a UCFD, being PRINC and CFD, again by Theorem 1.7 of \cite{UCFD}. Assume that $x_1$ is a product of $n$ pairwise comaximal pseudo-irreducible elements. Since $R$ is a UCFD $n$ does not depend on the complete comaximal factorization. Take $m>n$. Then,
\begin{equation}\label{xn}
x_1 = x_m \prod_{i = 2}^m(1 + x_i),
\end{equation}
where the factors on the right side are comaximal. In fact $(x_m,1 + x_i)=R$ for $i\leq m$ since $x_i\in (x_m)$.  For $i<j$, we have $(x_i)\subseteq (x_{j-1})\subseteq (1+x_j)$, so $(1+x_i, 1+x_j)=R$.
So $x_1$ is a product of $m$ pairwise comaximal elements and each element has a complete comaximal factorization. It follows that $x_1$ has a complete comaximal factorization with $k\geq m>n$ elements, a contradiction.
\end{proof}

We remark that Theorems \ref{monoid-domain nonCFD} and \ref{bigcup nonCFD} provide examples of PRINC domains to which our Theorem \ref{D[X]} applies, but Theorem 2.3 of \cite{UCFD} is not applicable, since they are not CFDs. 

\begin{Remark}
In the notation of Theorems \ref{monoid-domain nonCFD} and \ref{bigcup nonCFD}, if $D = K$ is a field, then the corresponding domains are examples of non-CFD B\'ezout domains. Indeed, it is readily seen that within a union of a join-semilattice of subrings that are PID (isomorphic to $K[Z]$ in the present cases) the finitely generated ideals are principal. Note that, since B\'ezout domains are projective-free, all these rings are also examples of non-CFD projective-free domains.
\end{Remark}

We end this section constructing integral domains without pseudo-irreducible elements. An alternative proof of the following proposition \ref{Juett} can be obtained properly applying \cite[Th.~3.1]{Juett}. 

\begin{Proposition} \label{Juett}
Let $K$ be an algebraically closed field of characteristic $\chi$, $\Gamma$ a subgroup of $\Q$ divisible by a prime $p \ne \chi$. Then the group domain $R = K[X; \Gamma]$ does not contain pseudo-irreducible elements.
\end{Proposition}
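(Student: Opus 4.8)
The plan is to show that every nonzero nonunit $f\in R=K[X;\G]$ factors as $f=cd$ with $c,d$ comaximal nonunits; since $R$ has nonzero nonunits whenever $\G\ne 0$, this says precisely that $R$ has no pseudo-irreducible elements. The first step is a normalization. Because $\G\subseteq\Q$ is totally ordered, the group domain $R$ is an ordered group ring, and the extreme (least and greatest) exponents of a product add without cancellation; hence a unit must have singleton support, so $R^{\times}=\{cX^{g}:c\in K^{\times},\,g\in\G\}$. Multiplying $f$ by a suitable unit monomial, I may therefore assume that the least exponent occurring in $f$ is $0$ and that the constant term is $1$.

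Next I exploit that every finitely generated subgroup of $\Q$ is cyclic: the finitely many exponents appearing in $f$ all lie in $\delta\Z$ for a single $\delta\in\G$ with $\delta>0$. Setting $Z=X^{\delta}$, which is a unit of $R$, the normalized $f$ becomes a genuine one-variable polynomial $\tilde f(Z)\in K[Z]$ with $\tilde f(0)\ne 0$ and $\deg\tilde f\ge 1$; moreover $\tilde f$ is not a monomial, since $f$ is a nonunit. As $K$ is algebraically closed, $\tilde f$ splits into linear factors and all of its roots are nonzero.

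If $\tilde f$ has at least two distinct roots, I group the linear factors by root and write $\tilde f=g(Z)h(Z)$ with $g,h\in K[Z]$ sharing no root, hence coprime in $K[Z]$. A B\'ezout relation $ug+vh=1$ in $K[Z]\subseteq R$ witnesses comaximality of $g$ and $h$ in $R$, and neither factor is a monomial, so neither is a unit. Absorbing the normalizing unit monomial into one factor (which preserves both comaximality and nonunit status) yields the desired factorization $f=cd$.

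The remaining, and genuinely essential, case is $\tilde f(Z)=a(Z-\alpha)^{m}$ with a single root $\alpha\ne 0$. Here I invoke the two standing hypotheses. Since $\G$ is $p$-divisible, $\delta/p\in\G$, so I pass to the finer variable $W=X^{\delta/p}\in R$, for which $Z=W^{p}$ and $\tilde f=a(W^{p}-\alpha)^{m}$. Because $\alpha\ne 0$ and $\operatorname{char}K=\chi\ne p$, the polynomial $W^{p}-\alpha$ is separable with $p\ge 2$ distinct roots, so $\tilde f$ now has at least two distinct roots as a polynomial in $W$, and the splitting of the previous paragraph applies verbatim in $K[W]\subseteq R$. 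I expect this separability step---the one point where the assumption $p\ne\chi$ is indispensable---to be the crux of the argument; everything else is the bookkeeping of normalization, the description of the trivial units, and the reduction to a single-variable polynomial via local cyclicity of $\G$.
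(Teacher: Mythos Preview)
Your proof is correct and follows essentially the same approach as the paper: normalize by a unit monomial, use local cyclicity of $\Gamma$ to reduce to a one-variable polynomial over $K$, split over the algebraically closed field, and in the single-root case pass to the $p$-th root variable via $p$-divisibility, where $p\ne\chi$ gives the needed distinct factors. The only cosmetic difference is that in the single-root case the paper makes the substitution $Z=X^{t/p}b^{-1/p}$ so that the factor becomes $Z^p-1=(Z-1)(1+Z+\cdots+Z^{p-1})$ and exhibits the two comaximal pieces explicitly, whereas you keep $W=X^{\delta/p}$, observe that $W^p-\alpha$ is separable, and recurse to your ``two distinct roots'' paragraph; these are equivalent ways of saying the same thing.
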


\begin{proof}
We first note that $X^s$ is a unit of $R$ for every $s \in \Gamma$, since $\Gamma$ is a group. Let $\varphi$ be any nonzero nonunit element of $R$. Since $\Gamma$ is a locally cyclic abelian group, $\varphi \in K[X^t, X^{-t}]$ for a suitable $ t \in \Gamma^+$. So we may write $\varphi=f/X^{nt}$, where $f\in K[X^t]$, $f(0)\neq 0$ and $n\in\Z$ is suitably chosen. Since $K$ is algebraically closed, we get $f = a \prod_{i = 1}^m (X^t - a_i)$, for suitable $a, a_i \in K$, where $a_i \ne 0$ for all $i \le m$. Then $f$ is not pseudo-irreducible whenever $a_i \ne a_j$ for some $i \ne j$. So we may reduce ourselves to the case $f = (X^t - b)^m$, $b \in K^{\times}$. Since $\Gamma$ is $p$-divisible, we get $X^{t} - b = b(Z^p -1) =  b(Z-1) ( 1 + \sum_{i = 0}^{p-1} Z^i)$, where $Z = X^{t/p}b^{-1/p} \in R$. The elements $Z-1$ and $1 + \sum_{i = 0}^{p-1} Z^i$ are comaximal, since $p \ne \chi$, hence we readily conclude that $f$ is not pseudo-irreducible.
\end{proof}

\section{PRINC domains neither CFD nor projective-free }

The purpose of this last section is to provide classes of PRINC domains that are neither CFD nor projective-free.

Our next constructions are based on a beautiful example due to McAdam and Swan \cite{UCFD}. We briefly recall it, following their notations.

Let $B_n = \R[X_0, \dots, X_n]/(\sum_{i = 0}^n X_i^2 - 1)$ be the ring of the real-valued polynomials on the $n$-sphere $S^n$, $n \ge 2$. Let $A_n$ be the subring of $B_n$ of the even functions, namely $A_n = \R[x_ix_j : 0 \le i, j \le n]$, where $x_i = X_i + (\sum_{i = 0}^n X_i^2 - 1)$. Then the following hold (see \cite{UCFD}):

(i) ${\rm Pic}(A_n) \cong \Z/2\Z$;

(ii) $n + 1$ is minimal number of generators of the prime ideal $P_n = (x_0x_i : 0 \le i \le n)$.

From (i), (ii) it follows that the class of $P_n$ generates ${\rm Pic}(A_n)$, and that any two-generated ideal of $A_n$ is principal, hence $A_n$ is a PRINC domain. Since $A_n$ is Noetherian, it is also a CFD, hence $A_n$ is a UCFD, by Theorem 1.7 of \cite{UCFD}. Of course $A_n$ is not projective-free since ${\rm Pic}(A_n)\neq 0$.

The next fact was not remarked in \cite{UCFD}.

\begin{Lemma} \label{A_n seminormal}
In the above notation, $A_n$ is a seminormal ring.
\end{Lemma}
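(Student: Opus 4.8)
The plan is to prove the stronger statement that $A_n$ is integrally closed in its field of fractions. Seminormality then follows immediately from characterization (3) recalled above: if $\alpha\in\mathrm{Frac}(A_n)$ satisfies $\alpha^2,\alpha^3\in A_n$, then $\alpha$ is a root of the monic polynomial $T^2-\alpha^2$ over $A_n$, hence integral over $A_n$, so $\alpha\in A_n$. Thus it suffices to show that $A_n$ is normal.

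The key point is that $A_n$ is the invariant subring of a finite group acting on $B_n$. The antipodal map $\sigma\colon x_i\mapsto -x_i$ is a well-defined automorphism of $B_n$, because the defining relation $\sum_i x_i^2-1$ has even degree and so is fixed by $\sigma$; since $\sigma^2=\mathrm{id}$, this gives an action of $\Z/2\Z$. The parity grading of the polynomial ring descends to $B_n$ (again because the relation is even), yielding $B_n=(B_n)_{\mathrm{even}}\oplus(B_n)_{\mathrm{odd}}$ with $B_n^{\sigma}=(B_n)_{\mathrm{even}}$. As every even monomial in the $x_i$ is a product of degree-two monomials $x_ix_j$, we get $(B_n)_{\mathrm{even}}=A_n$, i.e.\ $A_n=B_n^{\langle\sigma\rangle}$.

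Next I would record that $B_n$ is normal: the polynomial $\sum_i X_i^2-1$ is irreducible over $\R$, so $B_n$ is a domain, and $\mathrm{Spec}(B_n)$ is smooth over $\R$ since the gradient $(2X_0,\dots,2X_n)$ of the defining equation vanishes only at the origin, which is not on the sphere; hence $B_n$ is regular, in particular integrally closed. I would then invoke the standard fact that the invariant ring of an integrally closed domain under a finite group is integrally closed: one has $\mathrm{Frac}(A_n)=\mathrm{Frac}(B_n)^{\sigma}$ (any $\sigma$-invariant fraction $a/b$ becomes a quotient of invariants after multiplying numerator and denominator by $\sigma(b)$), and any element of $\mathrm{Frac}(A_n)$ integral over $A_n$ is integral over $B_n$, hence lies in $B_n$, and being $\sigma$-fixed lies in $A_n$. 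This proves $A_n$ normal, hence seminormal.

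The step requiring the most care is the identification $A_n=B_n^{\sigma}$ together with the fraction-field equality $\mathrm{Frac}(A_n)=\mathrm{Frac}(B_n)^{\sigma}$, both of which rely crucially on the defining relation being homogeneous of even degree; the normality of $B_n$ and the invariant-ring principle are then routine. Alternatively, one can bypass the invariant-ring theorem and argue directly via (3): given $\alpha\in\mathrm{Frac}(A_n)$ with $\alpha^2,\alpha^3\in A_n$, normality of $B_n$ already forces $\alpha\in B_n$; writing $\alpha=\alpha_e+\alpha_o$ in its even and odd parts, the odd part $2\alpha_e\alpha_o$ of $\alpha^2\in A_n$ must vanish, so $\alpha_e\alpha_o=0$, and if $\alpha_e=0$ then $\alpha^3=\alpha_o^3$ is odd yet lies in $A_n$, forcing $\alpha_o=0$; in either case $\alpha=\alpha_e\in A_n$.
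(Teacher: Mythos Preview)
Your proof is correct and in fact establishes the stronger conclusion that $A_n$ is integrally closed, not merely seminormal. The paper takes a different and more hands-on route: it works inside the localization $A' = A_n[x_0^{-2}] \cong \R[Y_1,\dots,Y_n][(1+\sum Y_i^2)^{-1}]$, which is integrally closed (being a localization of a polynomial ring), so that for $f,g\in A_n$ with $f^3=g^2$ the desired element $\alpha$ exists in $A'$; it then writes $\alpha = c\,x_0^{-2m}$ with $c\in A_n$ and argues by contradiction, using the prime ideal $P_n=(x_0x_i)$ and the even/odd dichotomy, that one may take $m=0$. Your approach is more structural: recognizing $A_n$ as $B_n^{\langle\sigma\rangle}$ for the antipodal involution lets you invoke the standard fact that invariants of a normal domain under a finite group remain normal, and the parity-decomposition alternative you sketch is a clean direct verification of condition~(3). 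The one phrasing to tighten is the final clause ``homogeneous of even degree'': $\sum X_i^2-1$ is not homogeneous for the usual $\Z$-grading, but it \emph{is} homogeneous (of degree~$0$) for the $\Z/2\Z$-parity grading, which is exactly what you need and what you correctly stated earlier in the argument.
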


\begin{proof}
As observed in \cite{UCFD}, we have an isomorphism 
$$
A' = A_n[x_0^{-2}] \cong \R[Y_1, \dots, Y_n][(1 + \sum Y_i^2)^{-1}],
$$
hence $A' $ is integrally closed. Let us pick $f, g \in A_n \subset A'$  such that $f^3 = g^2$. Since $A'$ is seminormal, there exists $\a \in A'$ such that $\a^2 = f$ and $\a^3 = g$. Say $\a = c x_0^{-2m}$, where $c \in A_n$, $m \ge 0$. Assume, for a contradiction, that $m \ge 1$ and $c \notin x_0^2 A_n$. Then $\a^2 \in A_n$ yields $c^2 \in x_0^{2m} A_n \subset P_n$, hence $c \in P_n$, say $c = x_0 \sum \b_i x_i$, where $\b_i \in A_n$. Moreover $(c/x_0)^2 \in x_0^{4m - 2}A_n \subset A'$ yields $c/x_0 \in A'$, since $A'$ is integrally closed. So $c = x_0 h$, where $h \in A'$, which is impossible, since both $c$ and $h$ are even functions. We conclude that $m = 0$, so that $\a \in A_n$, and $A_n$ seminormal follows.
\end{proof}

The above discussions furnish two basic examples of seminormal PRINC domains that are not projective-free, namely $A_n$ and $B_2$ ($B_2$ is seminormal since it is a UFD, as recalled in Example \ref{UFD-non-proj-free}).

In our final result we construct PRINC domains that are neither CFD nor projective-free. Note that it is applicable to both $A_n$ and $B_2$ since $A_n, B_2\supseteq \Q$.

\begin{Theorem}\label{S2}
Let $D$ be a seminormal PRINC domain non-projective-free. 

(i) Let $R=D[X;S]$ where $D$ and $S$ satisfy the hypotheses of Theorem \ref{monoid-domain nonCFD}.

(ii) Let $R = \bigcup_{n > 0} R_n$, where $R_n = D[x_n]$ and $x_n = x_{n+1}(1 + x_{n+1})$, for $n > 0$. 

In both the above cases, $R$ is a PRINC domain that is neither a CFD nor a projective-free domain. 
\end{Theorem}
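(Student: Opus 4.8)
The plan is to get the PRINC and non-CFD assertions for free from the earlier results, and to concentrate the real work on showing that $R$ fails to be projective-free, which I would deduce from Lemma \ref{Lemma}.

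First, in case (i) the standing hypotheses are exactly those of Theorem \ref{monoid-domain nonCFD}, so that theorem immediately gives that $R = D[X;S]$ is a seminormal PRINC domain that is not a CFD. Likewise, in case (ii) Theorem \ref{bigcup nonCFD}, applied with base ring $D$ in the role of $D_0$, tells us that $R = \bigcup_{n>0} R_n$ is a seminormal PRINC domain that is not a CFD. Thus in both cases it only remains to prove that $R$ is not projective-free.

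For this, the strategy is to exhibit $R$ in the form $R = D + M$ covered by Lemma \ref{Lemma}, that is, with $D$ a subring of $R$, $M$ an ideal of $R$, and $D \cap M = 0$. The natural choice for $M$ is the kernel of an augmentation homomorphism $\epsilon : R \to D$ that restricts to the identity on $D$ and kills the ``variable'' generators. In case (i), since $S$ is a pure monoid it is order-isomorphic to a submonoid of $(\Q^+,+)$; in particular $s + t = 0$ forces $s = t = 0$, so sending $X^0 \mapsto 1$ and $X^s \mapsto 0$ for $s > 0$ defines the ``constant term'' ring homomorphism $\epsilon$ onto $D$. In case (ii), I would send every $x_n$ to $0$; the one point needing a check is that this is compatible with the defining relations $x_n = x_{n+1}(1 + x_{n+1})$, which holds because $0 = 0 \cdot (1 + 0)$, so $\epsilon$ is well defined on $R = \bigcup_{n>0} D[x_n]$. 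In either case, setting $M = \ker \epsilon$, the identity $f = \epsilon(f) + (f - \epsilon(f))$ gives $R = D + M$, while $\epsilon|_D = \mathrm{id}_D$ gives $D \cap M = 0$.

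With $R = D + M$ and $D \cap M = 0$ established, and $D$ not projective-free by hypothesis, Lemma \ref{Lemma} applies verbatim and yields that $R$ is not projective-free, completing the argument in both cases. The only genuinely nontrivial step is confirming that $\epsilon$ is a well-defined ring homomorphism: in case (i) this rests on the positivity of $S$, and in case (ii) on the compatibility of $x_n \mapsto 0$ with the chain of relations. Both are quick verifications, and the appeal to Lemma \ref{Lemma} is then purely mechanical.
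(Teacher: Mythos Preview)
Your proposal is correct and follows essentially the same route as the paper: invoke Theorems \ref{monoid-domain nonCFD} and \ref{bigcup nonCFD} for the PRINC and non-CFD claims, then write $R = D + M$ and apply Lemma \ref{Lemma} for non-projective-free. The only cosmetic difference is that the paper describes $M$ directly as the ideal generated by the $X^s$ (respectively the $x_n$), whereas you realize the same $M$ as the kernel of the augmentation $\epsilon$; your formulation has the mild advantage of making $D \cap M = 0$ and $R = D + M$ immediate from $\epsilon|_D = \mathrm{id}_D$.
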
 

\begin{proof}
(i) Since $D$ is a seminormal PRINC domain, it follows from Theorem \ref{D[X; S] PRINC} that $R$ is PRINC. Moreover, Theorem \ref{monoid-domain nonCFD} implies that it is not a comaximal factorization domain. It remains to prove that it is not projective-free. We observe that $R = D + M$, where $M$ is the $R$-ideal generated by the $X^s$, $s \in S$, hence we can apply Lemma \ref{Lemma}.
  
  (ii) The argument is similar to the previous one. Here $R$ is not a CFD by Theorem \ref{bigcup nonCFD} and $ R= D + M$, where $M$ is the ideal generated by the $x_n$'s.

\end{proof}

\begin{Remark}
The union of a join-semilattice of seminormal integral domains is clearly seminormal, as well: just apply the equivalent condition (3) for seminormality. So the rings $R$ constructed in Theorem \ref{S2} are seminormal, since they are unions of seminormal domains isomorphic $D[Z]$. Then, starting with these new rings, we can iterate and combine the constructions made in Theorem \ref{S2}, producing large classes of non-CFD PRINC domains, that are not projective-free, by Lemma \ref{Lemma}.
\end{Remark}

\section*{Acknowledgments}
This research was supported by Dipartimento di Matematica ``Tullio Levi-Civita'' Università di Padova, under Progetto BIRD 2017 - Assegni SID (ex Junior) and Progetto DOR1690814 ``Anelli e categorie di moduli''. The first author is a member of the Gruppo Nazionale per le Strutture Algebriche, Geometriche e le loro Applicazioni (GNSAGA) of the Istituto Nazionale di Alta Matematica (INdAM). The authors thank the referee for several useful suggestions and comments.

\bibliographystyle{plain}

\end{document}